\documentclass[12pt]{amsart}
\usepackage[utf8]{inputenc}
\usepackage{mathtools,amssymb}
\usepackage{xcolor}
\usepackage{bm}
\usepackage[margin=0.75in,top=0.8in]{geometry}
\everymath{\displaystyle}

\newtheorem{theorem}{Theorem}[section]

\newtheorem{corollary}{Corollary}[section]
\newtheorem{definition}{Definition}[section]

\newtheorem{lemma}{Lemma}[section]

\newtheorem{remark}{Remark}[section]

\usepackage{hyperref}

\title{ Weak Observability Characterization for Abstract  Wave Equations } 

\author{Nisrine Charaf\dag}
\address{ \dag Laboratoire Jean Kuntzmann,  UMR CNRS 5224, 
Universit\'e  Grenoble-Alpes, 
Bat. IMAG, 150 Pl. du Torrent 38400, St Martin d’H\`eres, France.}
\email{Nisrine.Charaf@univ-grenoble-alpes.fr}

\author{Faouzi Triki\ddag}
\address{\ddag Laboratoire Jean Kuntzmann,  UMR CNRS 5224, 
Universit\'e  Grenoble-Alpes, 
Bat. IMAG, 150 Pl. du Torrent 38400, St Martin d'H\`eres, France.}
\email{faouzi.triki@univ-grenoble-alpes.fr}

\begin{document}

\begin{abstract}
In this paper, we investigate the weak observability of second-order infinite-dimensional evolution systems generated by skew-adjoint operators of the form $iA_0$, where $A_0$ is a self-adjoint elliptic operator. We first establish a spectral characterization of weak observability by introducing the notion of spectral coercivity for the observation operator and proving its equivalence to a suitable resolvent estimate. Our main result reveals a direct link between resolvent estimates for the elliptic operator $A_0$ and the weak observability of the associated evolution generator $A$. More precisely, we prove that a resolvent inequality for $A_0$ implies a Hautus-type spectral observability estimate for $A$, which guarantees the weak observability of the system. This provides a unified spectral framework for weak observability based on the coercivity properties of the observation operator. As an application, we establish explicit weak observability estimates for the wave equation on a rectangular domain under several geometric configurations of the observation region. The analysis combines frequency-domain methods, resolvent estimates, and Fourier analysis, yielding new insights into the interplay between resolvent inequalities, spectral coercivity, and weak observability in infinite-dimensional systems.

\end{abstract}

\maketitle

\section{Introduction}\label{observability: section1}
\noindent

Let $H$ be a complex Hilbert space endowed with the norm and inner product respectively ${\lVert . \rVert_H}$ and $\langle \cdot, \cdot \rangle_H$. Let $A_{0}:\, H \rightarrow H$ be a linear unbounded self-adjoint positive operator with a compact resolvent and with domain $D(A_{0})$.  Denote by $D(A_0^{\frac{1}{2}})$ the domain of $A_0^{\frac{1}{2}}$, and define for $\beta$ $\in$ $\mathbb{R}$ the scale of Hilbert spaces $H_\beta$ by  $H_{\beta} = D(A_0^{\frac{\beta}{2}})$ with the norm $ \lVert u \rVert_{H_{\beta}} = \lVert A_0^{\frac{\beta}{2}}u \rVert_{H}$ for $\beta \geq 0$, and by $H_\beta= H^*_{-\beta}$ using the duality with respect to the pivot space $H$  for $\beta<0$. \\
The operator $A_0$ can be extended 
or restricted to $H_\beta$ to becomes a bounded linear operator $A_0 : \, H_\beta \rightarrow H_{\beta-2}$ for all
$\beta \in \mathbb R$.\\
\noindent  Let $Y$ be a complex Hilbert space equipped with the norm and scalar product respectively ${\lVert \cdot \rVert_{Y}}$ and $\langle \cdot, \cdot \rangle_Y$.
Denote $  \mathcal L(D(A_{0}), Y)$  the space of linear bounded operators from $D(A_{0})$ into $Y$, and let
$C_{0} \in  \mathcal L(D(A_{0}), Y)$.  \\

\noindent  This paper is concerned with the following abstract infinite-dimensional dual observation system described by the equations

\begin{equation}\label{waveequationn}
\left\{ \begin{array}{lllcc}
   \ddot u(t)+A_0 u(t) = 0, \quad  t>0, \\
    w(t) = C_0u(t), \quad t>0, \\
    u(0) = u_{0},\; \dot u(0) = u_{1}.
\end{array}\right.
\end{equation}
\noindent Here, the dot notation represents differentiation with respect to the time $t$. The element $z_0 =(u_0, u_1)^t $ is referred to as the $initial \  state$, while $u(t)$ denotes the state at time $t$ and $w$ is the output function. Such systems are commonly used to model vibrating phenomena  involving  the scalar or vector  wave equation that arise  in fields like acoustics, electromagnetism, fluid dynamics and mechanics.\\

\noindent We further consider the following operator-valued matrices 

\begin{eqnarray} A = -i \begin{pmatrix} 0 & I \\ - A_0 & 0 \end{pmatrix}, \quad  C = \begin{pmatrix} C_{0} & 0 \end{pmatrix}. \label{operatorsAC}
\end{eqnarray}

Let $X = H_1 \times H$ be the product of the complex Hilbert spaces $H_1$ and $H$. We equip $X$ with the canonical product norm $\|\cdot\|_X$ and the corresponding inner product $\langle \cdot,\cdot\rangle_X$.
Therefore, $A :  X \rightarrow X$ is a linear unbounded self-adjoint  operator with a compact resolvent, with domain $D(A)= D(A_0)\times H_1$, and $C \in  \mathcal L(D(A), Y)$. According to Stone's theorem, $iA$ (resp. $iA_0$) generates a strongly continuous group of isometries in $X$ (resp. $H$) denoted $(e^{itA})_{t {\in \mathbb{R}} }$  (resp. 
$(e^{itA_0})_{t {\in \mathbb{R}} }$)\cite{TW}. \\

\noindent The second order evolution system \eqref{waveequationn} can be expressed in the following first order evolution system  

\begin{equation}\label{observability1}
\left\{ \begin{array}{lllcc}
   \dot {z} (t)  -i A z(t) = 0 , \quad  t>0, \\
    y(t) = Cz(t), \quad t>0, \\
    z(0) =z_0 \in X.
\end{array}\right.
\end{equation}

In inverse problems framework the system \eqref{observability1} is referred to as the $direct \ problem$, which consists in determining the observation $y(t) = Cz(t)$ of the state $z(t)$ for given initial state $z_0$ and unbounded operator $A$. The $inverse \ problem$ aims to recover the initial state $z_0$ from the knowledge of the observation $y(t)$ over the time interval $t$ $\in$ $(0,T)$, where $T >0$ has to be chosen  sufficiently large.\\
\\

Inverse problems for evolution equations, motivated by a wide range of applications, have been a prominent focus of mathematical modeling and numerical research in recent decades \cite{IP, Yamamoto}. These problems are particularly challenging to solve due to their complex mathematical structure, their ill-posedness, and the typical limitation of having only partial data available. Many linear inverse problems that arise in evolution equations such as those in data assimilation, medical imaging, and geosciences, can be framed within the general framework of system \eqref{observability1} (see, for example, \cite{ACT, ACT2, ZY} and references therein).\\

The system \eqref{observability1} has a unique weak solution $z \in C(\mathbb{R},X)$ defined by 

\begin{equation}\label{solution1}
  z(t)= e^{itA} z_0
\end{equation}

If $z_0$ is not in $D(A)$, in general $z(t)$ does not belong to $D(A)$, and consequently  the second equation of system \eqref{observability1} might not be defined. We further make the following additional admissibility assumption on the observation operator $C$ to get this equation well defined for all $z_0$ in $X$ (see \cite{TW} and references therein).

\begin{definition}
    The operator $C$ in system \eqref{observability1} is  an {\sl admissible observation operator}  if for every $T>0$ there exists $C_{T}>0$ such that

    \begin{equation}\label{observability2}
        \int_0^T {\lVert C e^{itA} z_0 \rVert^{2}_{Y}} dt \leq C_{T} {\lVert z_{0} \rVert^{2}_{X}}, \quad 
 \forall z_{0} \in D(A).
\end{equation}
\end{definition}
Without loss of generality, we  assume that $C_T$ is a non decreasing function of $T$. Indeed  if the assumption is not fulfilled we substitute $C_{T} > 0$ by $ \mathrm{sup}_{0 \leq t \leq T}C_{T}$. In particular if $C \in \mathcal L(X, Y)$, one cane take $C_T =  T \|C\|$. Here $\mathcal L(X, Y)$ is the  set of all the bounded linear operators from $X$ to $Y$. Since $D(A)$ is dense in $X$, we deduce from the  admissibility condition \eqref{observability2} that $C$ can be uniquely extended as a bounded operator from $X$ to $L^2_{\textrm loc}(\mathbb R_+, Y)$, and $Cz(t)$ is well defined for almost everywhere on $\mathbb R_+$ for any $z_0 \in X$. \\

Let $\sigma(A_0) = (\lambda_{k})_{k \in \mathbb{N}^{*}}$ be the spectrum of $A_0^{\frac{1}{2}}$ where $(\lambda_{k})_{k \in \mathbb{N}^{*}}$ is a non-decreasing sequence of  positive real numbers. The orthonormal sequence of eigenvectors of $A_0^{\frac{1}{2}}$ in $H$ is denoted by
$(\phi_{k})_{k \in \mathbb{N}^{*}}$.\\

Let
\[
\mu_k=\operatorname{sign}(k)\lambda_{|k|}, \qquad
\psi_k=\frac{1}{\sqrt{2}}
\begin{pmatrix}
\dfrac{1}{i\mu_k}\phi_{|k|}\\[4mm]
\phi_{|k|}
\end{pmatrix},
\quad k\in\mathbb{Z}^*.
\]
By the definition of (A) in \eqref{operatorsAC}, its spectrum is given by
\[
\sigma(A)=\{\mu_k\}_{k\in\mathbb{Z}^*}.
\]
Moreover, the family $(\psi_k)_{k\in\mathbb{Z}^*}$ forms an orthonormal basis of eigenvectors of $A$ in $X$, with corresponding eigenvalues $(\mu_k)_{k\in\mathbb{Z}^*}$.

\begin{definition} \label{Afrequency}
    Let $z \in D(A) \setminus \{0\} \longrightarrow \lambda_A(z) \in \mathbb{R}$ be the $A$-frequency function defined by
    \begin{eqnarray}
  \lambda_A(z)&=& \langle Az , z \rangle_{X} \lVert z \rVert^{-2}_{X} \nonumber\\
 &=& \sum_{k\in \mathbb Z^*} \mu_{k}|\langle z , \psi_{k} \rangle_{X}|^{2} \left( \sum_{k\in \mathbb Z^*}  |\langle z , \psi_{k} \rangle_{X}|^{2}\right) ^{-1}.   \label{frequencyA}
\end{eqnarray}  
\end{definition}

\begin{remark}
    We note that the relation \begin{equation}         
    0 \leq \lVert (A -\lambda_A(z)I)z \rVert^{2}_{X} \lVert z \rVert^{-2}_{X} = \frac{\lVert Az \rVert^{2}_{X}}{\lVert z \rVert^{2}_X} - \lambda_A^{2}(z) 
    \end{equation}
    holds for all $z \in D(A)\setminus \{0\}$. In addition, the equality $\frac{\lVert Az \rVert^{2}_{X}}{\lVert z \rVert^{2}_{X}} - \lambda_A^{2}(z) = 0$ is satisfied if and only if $z = \psi_{k}$ for some $k$ $\in$ $\mathbb{Z}^{*}$.
 \end{remark}
Using the spectral decomposition of \(A\), we write
\[
A = A_+ - A_-,
\]
where
\[
A_\pm = \sum_{k=1}^{\infty} \lambda_k \phi_{\pm k}.
\]
We then define the unbounded operator
\[
|A| := A_+ + A_-.
\]

\begin{definition} \label{|A|frequency}
    Let $z \in D(A) \setminus \{0\} \longrightarrow \lambda_{|A|}(z) \in \mathbb{R}$ be the $|A|$-frequency function defined by
    \begin{eqnarray}
  \lambda_{|A|}(z)&=& \langle |A|z , z \rangle_{X} \lVert z \rVert^{-2}_{X} \nonumber\\
 &=& \sum_{k\in \mathbb Z^*} \lambda_{|k|}|\langle z , \psi_{k} \rangle_{X}|^{2} \left( \sum_{k\in \mathbb Z^*} | \langle z , \psi_{k} \rangle_{X}|^{2}\right) ^{-1}.   \label{frequency|A|}
\end{eqnarray}  
\end{definition}

We remark that we have  $|\lambda_{A}(z)| \leq \lambda_{|A|}(z)$ for all $z\in D(A) \setminus \{0\}$,  and  $|\lambda_{A}(z)| = \lambda_{|A|}(z)$  if $z$ is an eigenvector of $A$.

Let $\Sigma_d$ (resp. $\Sigma_i$) be the set of functions $\Psi : \mathbb{R} \longrightarrow \mathbb{R}_{+}^{*}$ even continuous, non-increasing (resp. non-decreasing). Recall that if $\Psi$ $\in$ $\Sigma_d$ is not bounded below by a positive constant it satisfies $\lim_{\lambda \to+ \infty} \Psi(\lambda) = 0$. Similarly if $\mathtt T  \in \Sigma_i$ is not upper bounded by a positive constant it verifies $\lim_{\lambda \to + \infty} \mathtt T(\lambda) = +\infty$.

\begin{definition}
The system \eqref{observability1} is said to be {\sl weakly observable} if there exists a pair of  functions $\mathtt T \in \Sigma_i$  and $\Psi \in \Sigma_d$   such that following observation inequality holds:

     \begin{equation}\label{observability3}
       \Psi\Big(\lambda_{|A|}(z_{0})\Big) {\lVert z_{0} \rVert^{2}_{X}} \leq    \int_0^{\mathtt T(\lambda_{|A|}(z_0))} {\lVert Ce^{itA}z_0\rVert^{2}_{Y}} dt ,\quad
 \forall z_{0} \in D(A).
    \end{equation}
    If $\mathtt T$ is upper  bounded and $\Psi$ is lower bounded simultaneously, the system is said to be {\sl exactly observable}.
    \end{definition}
    
    \begin{remark} The functions $\mathtt T$, $\Psi$ in the definition of the spectral coercivity of $C$ are not unique.
     Indeed the inequality \eqref{observability3} still holds for all pairs of functions $\left(\widetilde {\mathtt T}, \,
     \widetilde \Psi \right) \in \Sigma_i \times  \Sigma_d$ satisfying $ \widetilde \Psi (\lambda) \leq \Psi(\lambda)$ and  $\widetilde{\mathtt{T}}(\lambda) \geq \mathtt{T}(\lambda)$ for all $\lambda \geq \lambda_0$, where $\lambda_0$ is a fixed positive constant.
       
    \end{remark}

It is also known that observability and controllability are dual properties \cite{DR,Coron}. Most existing research on observability inequalities for systems of partial differential equations relies on time-domain methods, including nonharmonic series \cite{KL, AI, LT, FI}, the multiplier method \cite{Li}, and microlocal analysis techniques \cite{BLR, SU}. Alternatively, some other studies employ frequency-domain approaches, following the well-established Fattorini-Hautus criterion for finite-dimensional systems \cite{Fa, Ha,Tucsnak 2005, Miller, AT, ZY, Li, Zu}. \\

In \cite{AT} the following abstract infinite-dimensional dual observation system has been  considered:

\begin{equation}\label{observability}
\left\{ \begin{array}{lllcc}
   \dot {u} (t)  -i A_0 u(t) = 0 , \quad  t>0, \\
    w(t) = C_0u(t), \quad t>0, \\
    u(0) =z_0 \in X.
\end{array}\right.
\end{equation}

Precisely the authors  there have used frequency domain techniques, resolvent inequality and Fourier transform to 
derive a new frequency domain test for system \eqref{observability}'s observability that was formulated solely in terms of the operators $A_0$ and $C_0$. The time-domain system\eqref{observability} has been transformed into its frequency-domain equivalent, and the new found  test  primarily involves the solution in the frequency domain and the observability operator $C_0$. \\

Our objective here is to obtain a complete characterization for the
weak observability of the system \eqref{observability1} and equivalently the system \eqref{waveequationn} in the frequency domain in terms of the operators $A_0$ and $C_0$.  The frequency-domain test is particularly well-suited for numerical validation and calibration of physical models for several reasons: system parameters are typically measured in the frequency domain, and solving the system is generally more robust and efficient in this domain. In addition, in practice we only have access to the spectral properties of the operator $A_0$ and the action of the observation operator $C_0$ on the packets of eigenfunctions of $A_0$. \\

The paper is organized as follows. In Section~\ref{section2}, we provide a complete characterization of the weak observability of system~\eqref{observability1} in the frequency domain in terms of the operators $A$ and $C$. More precisely, Theorem~\ref{spectralcrvv1} establishes the equivalence between the spectral coercivity of $C$, introduced in Definition~\ref{spectralcoercivity}, and the weak observability inequality for system~\eqref{observability1}. The proofs of the results of this section are provided in the Appendix.\\

\noindent Section~\ref{section3}, which contains the main contribution of this paper, is devoted to the weak observability of the hyperbolic system~\eqref{waveequationn}. Our main result establishes a direct connection between resolvent estimates for the elliptic operator $A_0$ and the matrix-valued operator $A$. Specifically, Theorem~\ref{main1} shows that a resolvent estimate for $A_0$ implies a Hautus-type spectral observability estimate for $A$, which in turn yields the weak observability of the system (Corollary~\ref{maincorollary}).\\

\noindent Finally, in Section~\ref{section4}, we apply our abstract results to derive explicit weak boundary observability estimates for the wave equation on a rectangular domain under several geometric configurations of the observation region.

 \section{Characterization of weak Observability for the system  \eqref{observability1}}
\label{section2}
In this section we extend the results obtained in \cite{AT} to the system \eqref{observability1}. Precisely we
give a complete characterization for the weak observability of the system \eqref{observability1} in the frequency 
domain in terms of the operators $A$ and $C$. 

\begin{definition} \label{spectralcoercivity}
The operator $C$ is spectrally coercive if there exist functions $\varepsilon$, $\Psi$ $\in$ $\Sigma_d$ such that if $z \in$ $D(A) \setminus \{0\}$ satisfies
\begin{equation}
\frac{\lVert A z \rVert^{2}_{X}}{\lVert z \rVert^{2}_{X}} - \lambda_{A}^{2}(z) \leq \varepsilon(\lambda_{|A|}(z)),     
\end{equation}
then
\begin{equation}
    \lVert C z \rVert^{2}_{Y} \geq \Psi(\lambda_{|A|}(z)) \lVert z \rVert^{2}_{X}.
\end{equation}
\end{definition}

The following is a generalized Hautus-type test for the system \eqref{observability1}.

\begin{theorem}\label{spectralcrv}
    The operator $C$ is spectrally coercive if and only if there exist functions $\Psi$, $\varepsilon$ $\in \Sigma_d$ such that the following resolvent inequality holds:

    \begin{equation}\label{resolineqq1}
        \lVert z \rVert^{2}_{X} \leq \sup \Big\{ \frac{\lVert Cz \rVert^{2}_{Y}} {\Psi(\lambda_{|A|}(z))}  , \frac{\lVert (A -\lambda I)z \rVert^{2}_{X}}{(\lambda - \lambda_A(z))^{2} + \varepsilon(\lambda_{|A|}(z))} \Big \}, \quad \forall \lambda \in \mathbb{R}, \;\forall z \in D(A) \setminus  \{0\}.
    \end{equation}
    
\end{theorem}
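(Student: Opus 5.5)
The key tool is the identity
\[
\lVert (A-\lambda I)z\rVert_X^2 = \lVert Az\rVert_X^2 - 2\lambda\langle Az,z\rangle_X + \lambda^2\lVert z\rVert_X^2 = \Big(\frac{\lVert Az\rVert_X^2}{\lVert z\rVert_X^2}-\lambda_A^2(z)\Big)\lVert z\rVert_X^2 + (\lambda-\lambda_A(z))^2\lVert z\rVert_X^2 ,
\]
valid for all $z\in D(A)\setminus\{0\}$ and $\lambda\in\mathbb R$. It uses only that $A$ is self-adjoint, so that $\langle Az,z\rangle_X=\lambda_A(z)\lVert z\rVert_X^2$ is real. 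Write $\delta(z):=\lVert Az\rVert_X^2\lVert z\rVert_X^{-2}-\lambda_A^2(z)\geq 0$ for the quantity in Definition~\ref{spectralcoercivity}. Then
\[
\frac{\lVert (A-\lambda I)z\rVert_X^2}{(\lambda-\lambda_A(z))^2+\varepsilon(\lambda_{|A|}(z))} = \frac{\delta(z)+(\lambda-\lambda_A(z))^2}{(\lambda-\lambda_A(z))^2+\varepsilon(\lambda_{|A|}(z))}\,\lVert z\rVert_X^2 .
\]
The whole proof reduces to comparing $\delta(z)$ with $\varepsilon(\lambda_{|A|}(z))$, and the same pair $(\Psi,\varepsilon)$ should work in both directions.

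\emph{Spectral coercivity implies \eqref{resolineqq1}.} Take $\varepsilon,\Psi$ from Definition~\ref{spectralcoercivity}, and fix $z\in D(A)\setminus\{0\}$ and $\lambda\in\mathbb R$. We split into two cases.
\begin{itemize}
\item If $\delta(z)\le \varepsilon(\lambda_{|A|}(z))$, coercivity gives $\lVert Cz\rVert_Y^2/\Psi(\lambda_{|A|}(z))\ge\lVert z\rVert_X^2$, so the first term of the supremum already dominates $\lVert z\rVert_X^2$.
\item If $\delta(z)>\varepsilon(\lambda_{|A|}(z))$, the displayed ratio is $\ge 1$ for every $\lambda$, so the second term dominates $\lVert z\rVert_X^2$.
\end{itemize}
Hence \eqref{resolineqq1} holds with the same functions.

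\emph{\eqref{resolineqq1} implies spectral coercivity.} Assume \eqref{resolineqq1} holds with some $\Psi,\varepsilon\in\Sigma_d$. Let $z$ satisfy $\delta(z)\le \varepsilon'(\lambda_{|A|}(z))$, where we choose $\varepsilon'=\varepsilon/2\in\Sigma_d$ so as to get a strict gap. Choose the optimal frequency $\lambda=\lambda_A(z)$. The second term then equals
\[
\frac{\delta(z)}{\varepsilon(\lambda_{|A|}(z))}\lVert z\rVert_X^2\le \tfrac12\lVert z\rVert_X^2<\lVert z\rVert_X^2 ,
\]
so the supremum must be attained by the first term, that is $\lVert Cz\rVert_Y^2\ge \Psi(\lambda_{|A|}(z))\lVert z\rVert_X^2$.

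The strict inequality is in fact unnecessary. With $\varepsilon'=\varepsilon$ and $\delta(z)<\varepsilon$ the same argument works. In the boundary case $\delta(z)=\varepsilon$, taking $\lambda=\lambda_A(z)$ gives equality in the second term, which is harmless but inconclusive. Using $\varepsilon/2$ avoids having to treat that case, and $\varepsilon/2$ is still even, continuous, positive and non-increasing. So coercivity holds with the pair $(\varepsilon/2,\Psi)$.

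The only point needing care is the identity itself, namely the reality of $\langle Az,z\rangle_X$ and the expansion of the norm. It follows from the self-adjointness of $A$ on $X$ and the orthonormal eigenbasis $(\psi_k)$, so I expect no real obstacle beyond these bookkeeping issues about strict versus non-strict inequalities.
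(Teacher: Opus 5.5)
Your proof is correct and follows the paper's argument: the same case split on $\delta(z)$ versus $\varepsilon(\lambda_{|A|}(z))$, and the same choice $\lambda=\lambda_A(z)$ for the converse. Your identity $\lVert (A-\lambda I)z\rVert_X^2=\bigl(\delta(z)+(\lambda-\lambda_A(z))^2\bigr)\lVert z\rVert_X^2$ is a cleaner substitute for the paper's sign analysis of $\phi'(\lambda)$ and its limits at $\pm\infty$.

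Your passage to $\varepsilon/2$ is necessary, not merely convenient, so you should drop the remark that the strict gap is unnecessary and the phrase ``harmless but inconclusive''. When $\delta(z)=\varepsilon(\lambda_{|A|}(z))$, the second term of the supremum equals $\lVert z\rVert_X^2$ for every $\lambda$, so \eqref{resolineqq1} gives no information on $Cz$. This is exactly the equality case that the paper's converse passes over, since it keeps the same $\varepsilon$.
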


The proof of the theorem is given in the Appendix.
\begin{remark}
Using the fact that $\frac{1}{2}(a+b) \leq \sup(a,b)\leq a+b$, for all positive reals $a$ and $b$, one can easily show that the inequality \eqref{resolineqq1} is equivalent to
\begin{equation}\label{resolineqq11}
        \lVert z \rVert^{2}_{X} \leq  \frac{\lVert Cz \rVert^{2}_{Y}} {\Psi(\lambda_{|A|}(z))}+ \frac{\lVert (A -\lambda I)z \rVert^{2}_{X}}{(\lambda - \lambda_A(z))^{2} + \varepsilon(\lambda_{|A|}(z))}, \quad \forall \lambda \in \mathbb{R}, \;\forall z \in D(A) \setminus  \{0\},
    \end{equation}
    for some $\Psi$, $\varepsilon$ $\in \Sigma_d$.
\end{remark}

 The following result provide an equivalence between the spectral coerciveness of $C$ and the weak observability
 inequality \eqref{observability3}.
 
\begin{theorem}\label{spectralcrvv1}
    \noindent The system \eqref{observability1} is weakly observable if and only if $C$ is spectrally coercive, that is, the following two assertions are equivalent.
    \\
    \\
    $(i)$  There exist functions $\varepsilon$, $\Psi$ $\in$ $\Sigma_d$ such that if $u$ $\in$ $D(A) \setminus \{0\}$ satisfying
\begin{equation}
\frac{\lVert A z \rVert^{2}_{X}}{\lVert z\rVert^{2}_{X}} - \lambda_{A}^{2}(z) \leq  \varepsilon(\lambda_{|A|}(z)),     
\end{equation}
then,
\begin{equation}
    \lVert C z \rVert^{2}_{Y} \geq \Psi(\lambda_{|A|}(z)) \lVert z \rVert^{2}_{X}.
\end{equation}
     \\
     $(ii)$ The following weak observability  inequality for the system \eqref{observability} holds:
     \begin{equation}\label{observability4}
      \theta_{2} \Psi \left (\theta_{0}\left(\frac{1}{T}+\lambda_{|A|}(z_{0})\right)\right) {\lVert z_{0} \rVert^{2}_{X}} \leq    \int_0^T {\lVert C z(t)\rVert^{2}_{Y}} dt,\quad
 \forall z_{0} \in D(A),
    \end{equation}
   \noindent for all $T \geq \mathtt T(\lambda_{|A|}(z_{0}))$, where   $\mathtt T(\lambda)$ is the unique solution $T$ for a given
   $\lambda$ to the equation
    \begin{equation} \label{Eq:T}
        T\varepsilon\left(\theta_{0} \left(\frac{1}{T} + \lambda\right)\right) = \theta_{1},
    \end{equation}

    \noindent and  $\varepsilon$, $\Psi$ $\in$ $\Sigma_d$ are the functions appearing in the spectral coercivity of $C$, and $\theta_{i}$, $i=0,1,2$ are positive universal constants that do not depend on $z_0$. In addition, $\mathtt T \in \Sigma_i$.
 \end{theorem}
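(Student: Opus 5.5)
We will prove the two implications separately, following the frequency-domain strategy of \cite{AT} adapted to the self-adjoint generator $A$ on $X$. In both directions the spectral decomposition of $A$ in the orthonormal basis $(\psi_k)_{k\in\mathbb Z^*}$ and the quantities $\lambda_A(z)$, $\lambda_{|A|}(z)$ carry the argument.

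\emph{(i) $\Rightarrow$ (ii).} First we replace spectral coercivity by the equivalent resolvent inequality \eqref{resolineqq11} of Theorem~\ref{spectralcrv}. Fix $z_0\in D(A)$ and $T>0$, and let $\chi\in C_c^\infty(0,1)$ be a fixed cutoff with $\chi_T(t)=\chi(t/T)$. We set $v(t)=\chi_T(t)e^{itA}z_0$ and take its Fourier transform $\hat v(\lambda)=\int v(t)e^{-it\lambda}dt$. Then $(A-\lambda I)\hat v(\lambda)=i\,\widehat{\chi_T' e^{itA}z_0}(\lambda)$, and by Plancherel $\int\|(A-\lambda)\hat v\|^2d\lambda = 2\pi\|\chi'\|_{L^2}^2T^{-1}\|z_0\|^2$, while $\int\|\hat v\|^2 d\lambda=2\pi T\|\chi\|^2_{L^2}\|z_0\|^2$. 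We apply \eqref{resolineqq11} to $\hat v(\lambda)$ for each $\lambda$ and integrate in $\lambda$. To handle the $z$-dependence of $\Psi$ and $\varepsilon$, we bound $\lambda_{|A|}(\hat v(\lambda))$. Since $\hat v(\lambda)=\sum_k \hat\chi_T(\lambda-\mu_k)\langle z_0,\psi_k\rangle\psi_k$, the averaged frequency should be comparable to $\lambda_{|A|}(z_0)+O(1/T)$. Pointwise this fails, so we will restrict to the set of $\lambda$ where $\lambda_{|A|}(\hat v(\lambda))\le \theta_0(1/T+\lambda_{|A|}(z_0))$, and use a Chebyshev-type argument to show that this set carries at least half of $\int\|\hat v\|^2$. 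On that set, monotonicity of $\Psi,\varepsilon\in\Sigma_d$ lets us replace them by their values at $\theta_0(1/T+\lambda_{|A|}(z_0))$. The term $\|(A-\lambda)\hat v\|^2/((\lambda-\lambda_A)^2+\varepsilon)$ is then bounded by $\varepsilon^{-1}$ times $CT^{-1}\|z_0\|^2$. This term is absorbed into the left side $cT\|z_0\|^2$ exactly when $T\varepsilon(\theta_0(1/T+\lambda_{|A|}(z_0)))\ge\theta_1$, which gives \eqref{Eq:T} and $T\ge\mathtt T$. What remains is $\int\|C\hat v\|^2 = 2\pi\int\chi_T^2\|Ce^{itA}z_0\|^2dt\le 2\pi\|\chi\|_\infty^2\int_0^T\|Cz(t)\|^2dt$, which yields \eqref{observability4}. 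Admissibility justifies applying $C$ under the transform.

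We must also check that $\mathtt T$ is well defined and lies in $\Sigma_i$. The map $T\mapsto T\varepsilon(\theta_0(1/T+\lambda))$ is increasing in $T$, since both factors are nondecreasing in $T$ and the first is strictly increasing. It tends to $0$ as $T\to0$ and to $\infty$ as $T\to\infty$, provided we may assume, after modifying $\varepsilon$, that $\varepsilon$ is strictly decreasing and positive. Uniqueness then follows, and monotonicity in $\lambda$ gives $\mathtt T\in\Sigma_i$.

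\emph{(ii) $\Rightarrow$ (i).} Take $z$ with $\|Az\|^2/\|z\|^2-\lambda_A(z)^2\le\varepsilon(\lambda_{|A|}(z))$. We compare $e^{itA}z$ with $e^{it\lambda_A(z)}z$. Writing $w=(A-\lambda_A(z))z$, we have $\|e^{itA}z-e^{it\lambda_A}z\|\le t\|w\|\le t\sqrt{\varepsilon}\,\|z\|$. Hence
\[
\int_0^T\|Ce^{itA}z\|^2dt\le 2T\|Cz\|^2+2\int_0^T\|C(e^{itA}z-e^{it\lambda_A}z)\|^2dt.
\]
The last term is controlled by $C$ applied to a Duhamel integral, $e^{itA}z-e^{it\lambda_A}z=i\int_0^t e^{i(t-s)A}e^{is\lambda_A}w\,ds$. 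By admissibility \eqref{observability2} and Cauchy--Schwarz it is bounded by $C_T T^2\|w\|^2\le C_TT^2\varepsilon\|z\|^2$. Choosing $T=\mathtt T(\lambda_{|A|}(z))$ from (ii) and shrinking $\varepsilon$ so that $C_TT^2\varepsilon$ is at most half the observability constant yields $\|Cz\|^2\ge\Psi'(\lambda_{|A|}(z))\|z\|^2$ with a new $\Psi'\in\Sigma_d$.

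The main obstacle is the frequency-localization step in (i)$\Rightarrow$(ii): controlling $\lambda_{|A|}(\hat v(\lambda))$ and $\lambda_A(\hat v(\lambda))$ uniformly enough that the $z$-dependent weights in \eqref{resolineqq11} can be frozen at $\theta_0(1/T+\lambda_{|A|}(z_0))$. I would first try to bound $\int\lambda_{|A|}(\hat v)\|\hat v\|^2d\lambda=\langle |A|v,v\rangle_{L^2(X)}$, which equals $\|\chi\|^2T\langle|A|z_0,z_0\rangle$, and then apply Markov's inequality with respect to the measure $\|\hat v(\lambda)\|^2d\lambda$. The $1/T$ shift in the argument of $\Psi$ suggests that this averaging, or a variant with $\lambda$-weights, is exactly what produces it.
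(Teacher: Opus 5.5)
Your proposal is correct. The direction (ii)$\Rightarrow$(i) is essentially the paper's argument: compare $e^{itA}z$ with $e^{it\lambda_A(z)}z$ via Duhamel, control the $C$-image of the Duhamel term by admissibility, and evaluate (ii) at $T=\mathtt T(\lambda_{|A|}(z))$.

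For (i)$\Rightarrow$(ii) you use the same cutoff/Fourier/resolvent framework, but you localize in frequency by a different device.
\begin{itemize}
\item The paper first proves the pointwise bound $\lambda_{|A|}(\widehat x(\tau))\le 4|\tau|+c_0\lambda_{|A|}(z_0)$ (Theorem~\ref{Tfrequency}), which relies on the specially constructed cutoff satisfying \eqref{bbchi}. It then shows, through \eqref{inn1}, that the window $|\tau|\le R\approx 1/T+\lambda_{|A|}(z_0)$ carries half the mass of $\|\widehat x\|_X^2$. That step is a Chebyshev-type tail bound in $\tau$, and it is where the $1/T$ shift comes from.
\item You apply Markov directly to $\lambda\mapsto\lambda_{|A|}(\widehat v(\lambda))$ with respect to the measure $\|\widehat v(\lambda)\|_X^2\,d\lambda$. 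The key input is the exact identity $\int\langle|A|\widehat v,\widehat v\rangle_X\,d\lambda=\lambda_{|A|}(z_0)\int\|\widehat v\|_X^2\,d\lambda$, which holds because $|A|$ commutes with $e^{itA}$.
\end{itemize}
Your route works for any cutoff and bypasses Theorem~\ref{Tfrequency} entirely. It gives the threshold $2\lambda_{|A|}(z_0)$ with no $1/T$ term, a slightly stronger conclusion from which the stated form follows by monotonicity of $\Psi,\varepsilon$. Taking $\chi$ supported in $(0,1)$ also produces $\int_0^T$ directly, whereas the paper's $\chi$ is supported in $(-1,1)$. The paper's route buys pointwise frequency control at every $\tau$ in the window, which the argument does not actually need.

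Two bookkeeping points need correcting.

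First, with your own normalizations the left side is $\pi T\|\chi\|_{L^2}^2\|z_0\|_X^2$ and the error term is $2\pi T^{-1}\|\chi'\|_{L^2}^2\|z_0\|_X^2/\varepsilon(\Lambda)$, where $\Lambda=\theta_0(1/T+\lambda_{|A|}(z_0))$. Absorption therefore requires $T^2\varepsilon(\Lambda)\ge 4\|\chi'\|_{L^2}^2/\|\chi\|_{L^2}^2$, not ``exactly'' $T\varepsilon(\Lambda)\ge\theta_1$. You also end with $T\Psi(\Lambda)$ on the left, not $\Psi(\Lambda)$. To land on \eqref{Eq:T} and \eqref{observability4} as stated:
\begin{itemize}
\item Replace $\varepsilon$ by $\min(\varepsilon,1)$; this is harmless, since shrinking $\varepsilon$ preserves spectral coercivity.
\item Then $T\varepsilon(\Lambda)\ge\theta_1$ forces $T\ge\theta_1$, so $T^2\varepsilon(\Lambda)\ge\theta_1^2$ and $T\Psi(\Lambda)\ge\theta_1\Psi(\Lambda)$.
\item Choosing $\theta_1=2\|\chi'\|_{L^2}/\|\chi\|_{L^2}$ then closes the argument.
\end{itemize}
The paper's computation loses the same power of $T$.

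Second, in (ii)$\Rightarrow$(i) the natural new threshold is $\theta_2\Psi\bigl(\theta_0(1/\mathtt T(\lambda)+\lambda)\bigr)/\bigl(4C_{\mathtt T(\lambda)}\mathtt T(\lambda)^2\bigr)$. Its argument $1/\mathtt T(\lambda)+\lambda$ need not be monotone in $\lambda$. Replace $1/\mathtt T(\lambda)$ by $1/\mathtt T(0)$, as the paper does, to obtain a function in $\Sigma_d$.
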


\noindent The above theorem is somehow  an extension of several results in the literature \cite{BZ}, \cite{Ha},  \cite{Tucsnak 2005}, \cite{ZY}. We provide its proof in the Appendix. Since $\Psi$ is a non decreasing function
inequality \eqref{observability4} implies  the weak observability inequality \eqref{observability3}.   \\

\section{Weak observability for second order  infinite dimensional evolution systems} \label{section3}

In this section, we investigate an important particular case fitting in the framework of the previous section.
Recall that for $(u_0, u_1) \in X$, the second order evolution system \eqref{waveequationn} can be  transformed
into the first order evolution  system \eqref{observability1} by taking $z_0= (u_0, u_1)^t $, $z= (u, \dot u)^t$, and $y = w$. Our objective in this section is to derive a  test  for the
weak observability of the system \eqref{observability1} and equivalently the weak observability of the system \eqref{waveequationn} in the frequency domain in terms of the operators $A_0$ and $C_0$ that is  more explicit and practical  than the  one given in Theorem \ref{spectralcrvv1}.\\

\noindent Next we derive the  following  intermediate result
that is of interest itself.

\begin{theorem}\label{observabilitywavee}
Let $\delta_0 \in \Sigma_d$ and assume that the following inequality 
 \begin{equation}\label{stabestt1}   
 \delta_0(\omega) \Big (\omega^2  \lVert u \rVert^2_{H} + \lVert A_0^{\frac{1}{2}} u \rVert^2_{H}  \Big ) \leq  \lVert (A_0 - \omega^2I)u \rVert^2_{H} + \lVert C_0 u \rVert^2_{Y }, 
    \end{equation}
holds  for all $ \omega \in \mathbb{R}$ and for all $u \in H_2$.\\

 Then, there exists $\delta \in \Sigma_d$ such that the following inequality    
 \begin{eqnarray} \label{Hautus condition}
\delta(\lambda) \left \lVert z \right \rVert^{2}_{X}\leq \left \lVert (A - \lambda I)z \right \rVert^{2}_{X} + \left \lVert Cz \right \rVert^{2}_{Y},
 \end{eqnarray}
 is satisfied for all $\lambda \in \mathbb{R}$ and for all $z \in D(A)$.

\end{theorem}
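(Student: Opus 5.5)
We will argue directly, by rewriting the resolvent equation for the matrix operator $A$ as a resolvent equation for $A_0$ at $\omega=\lambda$ and then applying \eqref{stabestt1}. Fix $\lambda\in\mathbb R$ and $z=(u,v)^t\in D(A)=D(A_0)\times H_1$, and set $(A-\lambda I)z=(f,g)^t\in X$. From \eqref{operatorsAC},
\[
f=-iv-\lambda u\in H_1,\qquad g=iA_0u-\lambda v\in H .
\]
We solve the first relation for $v$, giving $v=i(f+\lambda u)$, and substitute into the second. This yields $g=i(A_0-\lambda^2 I)u-i\lambda f$, that is,
\[
(A_0-\lambda^2 I)u=-ig+\lambda f .
\]
Also $Cz=C_0u$ and $\lVert z\rVert_X^2=\lVert A_0^{1/2}u\rVert_H^2+\lVert v\rVert_H^2$.

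\emph{Step 1 (apply the hypothesis with $\omega=\lambda$).} Inequality \eqref{stabestt1} applies with $\omega=\lambda$, since $u\in H_2$. It gives
\[
\delta_0(\lambda)\big(\lambda^2\lVert u\rVert_H^2+\lVert A_0^{1/2}u\rVert_H^2\big)\le 2\lVert g\rVert_H^2+2\lambda^2\lVert f\rVert_H^2+\lVert Cz\rVert_Y^2 .
\]
We bound the $f$-term using $\lVert f\rVert_H\le \lambda_1^{-1}\lVert f\rVert_{H_1}$, where $\lambda_1>0$ is the smallest eigenvalue of $A_0^{1/2}$. Hence the right-hand side is at most $2\max(1,\lambda_1^{-2})(1+\lambda^2)\big(\lVert (A-\lambda I)z\rVert_X^2+\lVert Cz\rVert_Y^2\big)$.

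\emph{Step 2 (recover $\lVert z\rVert_X$).} Since $\lVert v\rVert_H^2\le 2\lambda^2\lVert u\rVert_H^2+2\lVert f\rVert_H^2$, we get
\[
\lVert z\rVert_X^2\le 2\big(\lambda^2\lVert u\rVert_H^2+\lVert A_0^{1/2}u\rVert_H^2\big)+2\lambda_1^{-2}\lVert f\rVert_{H_1}^2 .
\]
We bound the first bracket using Step 1, divided by $\delta_0(\lambda)$. For the second term we use $1\le \delta_0(0)/\delta_0(\lambda)$, which holds because $\delta_0$ is even and nonincreasing on $\mathbb R_+$. Altogether this gives
\[
\lVert z\rVert_X^2\le \frac{K(1+\lambda^2)}{\delta_0(\lambda)}\Big(\lVert (A-\lambda I)z\rVert_X^2+\lVert Cz\rVert_Y^2\Big)
\]
for a constant $K$ depending only on $\lambda_1$ and $\delta_0(0)$.

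\emph{Step 3 (choice of $\delta$).} We set $\delta(\lambda)=\delta_0(\lambda)/\big(K(1+\lambda^2)\big)$. This function is even, continuous, strictly positive, and nonincreasing on $\mathbb R_+$, being the product of two such functions. Hence $\delta\in\Sigma_d$, and \eqref{Hautus condition} follows.

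The only delicate point is the term $\lambda f$ in the reduced equation. It forces a loss of a factor $(1+\lambda^2)$ in $\delta$ relative to $\delta_0$, and we must check that this loss is compatible with the class $\Sigma_d$. We handle it by measuring $f$ in the $H_1$-norm, which is the natural norm of the first component in $X$, and by absorbing the polynomial factor into $\delta$. The remaining estimates are just the triangle inequality.
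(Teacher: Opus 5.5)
Your proof is correct, and it takes a different and much shorter route than the paper's. You eliminate $v$ via $v=i(f+\lambda u)$ and obtain $(A_0-\lambda^2I)u=-ig+\lambda f$. You then apply \eqref{stabestt1} to the whole of $u$ at $\omega=\lambda$, absorbing $\lambda f$ with the crude bound $\|f\|_H\le\lambda_1^{-1}\|f\|_{H_1}$.

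The paper proceeds differently:
\begin{itemize}
\item it proves the sharp modewise bound $\|(A-\omega I)z\|_X^2\ge\sum_k(|\omega|-\lambda_k)^2(\lambda_k^2|u_k|^2+|v_k|^2)$ (Lemma~\ref{decompspc});
\item it splits $u=u^0+\tilde u$ at frequency $|\omega|/3$ and applies \eqref{stabestt1} only to the high-frequency part $u^0$;
\item it controls $\tilde u$ directly through $(|\omega|-\lambda_k)^2\ge 4\omega^2/9$, and absorbs the cross term in $\|C_0u\|_Y^2$ using $\|C_0\tilde u\|_Y\le\|C_0\|\,\|A_0\tilde u\|_H$ (Lemma~\ref{kkk});
\item it recovers $\|v\|_H$ by a two-case argument.
\end{itemize}

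Your approach is simpler, and your constant does not depend on $\|C_0\|_{\mathcal L(H_2,Y)}$. The price is a factor $(1+\lambda^2)$: you get $\delta=\delta_0/(K(1+\lambda^2))$. The loss comes from handling the low-frequency components of $f$ through the hypothesis rather than through the good invertibility of $A_0-\lambda^2 I$ there; for those components $|\lambda|\,\|f\|_H$ really is of size $|\lambda|\lambda_1^{-1}\|f\|_{H_1}$. The paper's splitting avoids this and yields $\delta$ comparable to $\min(\delta_0,1)$.

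For the statement as written, and hence for Theorem~\ref{main1} and Corollary~\ref{maincorollary}, your version is enough. The quantitative applications in Section~\ref{section4}, however, rely on the loss-free form:
\begin{itemize}
\item Under Assumption I, a constant $\delta_0$ must give a constant $\delta$ to conclude exact observability. Your $\delta\sim(1+\lambda^2)^{-1}$ would give only weak observability, with $\mathtt T$ growing in $\lambda_{|A|}$.
\item Under Assumption II, you would get $(1+\lambda^2)^{-2}$ instead of the bound $c_5/(1+\lambda^2)$ used there.
\end{itemize}
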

\begin{proof}
\noindent 
Recall that   $(\phi_{k})_{k \in \mathbb{N}^{*}}$ is  the orthonormal sequence of eigenvectors of $A_0^{\frac{1}{2}}$ associated to the eigenvalues $(\lambda_{k})_{k \in \mathbb{N}^{*}}$. We first show the following optimal 
estimate.

\begin{lemma}\label{decompspc}
The following inequality 
    \begin{align} \label{interm7} 
    \rVert (A-\omega I)z \lVert^{2}_{X} \geq
  \sum_{k \in \mathbb{N^{*}}} (|\omega| - \lambda_k)^2 \lambda_k^2 |u_k|^2 +  \sum_{k \in \mathbb{N^{*}}} (|\omega| - \lambda_k)^2  |v_k|^2,
   \end{align}
   holds  for all $\omega \in \mathbb{R}$, and $z=(u,v)=\Big(\sum_{k \in \mathbb{N^{*}}} u_k \phi_k,\sum_{k \in \mathbb{N^{*}}} v_k \phi_k\Big) \in D(A)$.

\end{lemma}
\begin{proof}
 \noindent Let $z = (u,v) \in D(A)$, we have
\begin{align} \rVert (A-\omega I)z \lVert^{2}_{X} 
  &= \lVert (-\omega u -iv , iA_0 u - \omega v) \lVert^{2}_{H_1 \times H} \nonumber\\
&=  \lVert (A_0^{\frac{1}{2}}(\omega u + iv ) \rVert^{2}_{H} + \rVert iA_0 u - \omega v \lVert^{2}_{H}.   \label{inter1} 
  \end{align} 

Let
\[
u_r=\Re(u), \qquad v_r=\Re(v), \qquad u_i=\Im(u), \qquad v_i=\Im(v),
\]
and denote by $(u_{r,k}), (v_{r,k}), (u_{i,k}), $and $(v_{i,k})$ the coefficients in the respective spectral decompositions of $(u_r), (v_r), (u_i),$ and $(v_i).$\\

Hence  \eqref{inter1} is equivalent to the  the following inequality
\begin{align*} \rVert (A-\omega I)z \lVert^{2}_{X} 
&=   \lVert (A_0^{\frac{1}{2}}(\omega u + iv ) \rVert^{2}_{H} + \rVert iA_0 u + \omega v \lVert^{2}_{H} \\
&= \rVert A_0^{\frac{1}{2}} (v_i - wu_r) \lVert^{2}_{H} + \rVert -A_0 u_r + \omega v_i \lVert^{2}_{H} + \rVert A_0^{\frac{1}{2}} (v_r + wu_i) \lVert^{2}_{H} + \rVert A_0 u_i + \omega v_r \lVert^{2}_{H}.
  \end{align*} 
\noindent Then, applying the spectral decomposition of $u$ and
$v$, along with Young's inequality, we obtain:

\[
\begin{aligned}
&\left\| A_0^{1/2}(v_i-\omega u_r)\right\|_H^2
 + \left\| -A_0u_r+\omega v_i\right\|_H^2 \\
&\quad
 = \sum_{k\in\mathbb N^*}
 \Bigl[
   \lambda_k^2 (v_{i,k}-\omega u_{r,k})^2
   + (\omega v_{i,k}-\lambda_k^2u_{r,k})^2
 \Bigr] \\
&\quad
 = \omega^2 \sum_{k\in\mathbb N^*}
   \bigl(\lambda_k^2u_{r,k}^2+v_{i,k}^2\bigr)
   -4\omega \sum_{k\in\mathbb N^*}
   \lambda_k^2u_{r,k}v_{i,k} \\
&\qquad
   + \sum_{k\in\mathbb N^*}
   \lambda_k^2\bigl(\lambda_k^2u_{r,k}^2+v_{i,k}^2\bigr) \\
&\quad
 \ge
 \omega^2 \sum_{k\in\mathbb N^*}\lambda_k^2u_{r,k}^2
 -2|\omega|\sum_{k\in\mathbb N^*}\lambda_k^3u_{r,k}^2
 +\sum_{k\in\mathbb N^*}\lambda_k^4u_{r,k}^2 \\
&\qquad
 +\omega^2\sum_{k\in\mathbb N^*}v_{i,k}^2
 -2|\omega|\sum_{k\in\mathbb N^*}\lambda_k v_{i,k}^2
 +\sum_{k\in\mathbb N^*}\lambda_k^2v_{i,k}^2 \\
&\quad
 = \sum_{k\in\mathbb N^*}
   \lambda_k^2u_{r,k}^2
   \bigl(\omega^2-2|\omega|\lambda_k+\lambda_k^2\bigr) \\
&\qquad
   + \sum_{k\in\mathbb N^*}
   v_{i,k}^2
   \bigl(\omega^2-2|\omega|\lambda_k+\lambda_k^2\bigr) \\
&\quad
 = \sum_{k\in\mathbb N^*}
   (|\omega|-\lambda_k)^2\lambda_k^2u_{r,k}^2
   + \sum_{k\in\mathbb N^*}
   (|\omega|-\lambda_k)^2v_{i,k}^2 .
\end{aligned}
\]


Following the same steps, we similarly obtain
\begin{align*}
\rVert A_0^{\frac{1}{2}} (v_r + wu_i) \lVert^{2}_{H} + \rVert A_0 u_i + \omega v_r \lVert^{2}_{H} \geq 
\sum_{k \in \mathbb{N^{*}}} (|\omega| - \lambda_k)^2 \lambda_k^2 u_{i,k}^2 +  \sum_{k \in \mathbb{N^{*}}} (|\omega| - \lambda_k)^2  v_{r,k}^2.
\end{align*}

Adding  the last two inequalities provide the estimate \eqref{interm7}.

\end{proof}

\noindent Back to the proof of the theorem, for a fixed $\omega \in \mathbb R$,   $u  =\sum_{k \in \mathbb{N^{*}}} u_k \phi_k$ has the following orthogonal
decomposition 
\begin{equation} \label{interm2} u = u^0 + \tilde{u}, \end{equation}
 where 
\begin{equation} \label{interm3}
 u^0 =\sum_{\lambda_k \geq \frac{|\omega|}{3} }u_k \phi_k,  \;\;\; \tilde{u}  =\sum_{\lambda_k < \frac{|\omega|}{3} }u_k \phi_k.
\end{equation}
Indeed $u^0$ represents the high frequency part of $u$ while $\tilde u$ form the low frequency one.
\begin{lemma}\label{kk}
For any $u= \sum_{k \in \mathbb{N^{*}}} u_k \phi_k \in D(A_0)$ and for all $\omega \in \mathbb{R}$, the following inequality holds
        \begin{align} \label{interm6}
    \sum_{k \in \mathbb{N^{*}}} (|\omega| - \lambda_k)^2 \lambda_k^2 |u_k|^2 \geq \frac{1}{16} \lVert (A_0 -  \omega^2)u^0 \rVert^2_{H} + \frac{1}{9}\omega^2\lVert A_0^{\frac{1}{2}} \tilde u \rVert^2_{H}+3 \lVert A_0 \tilde{u} \rVert^2_{H}.
    \end{align}
   
\end{lemma}
\begin{proof}
Let $u \in D(A_0)$ and $\omega \in \mathbb{R}$. Therefore
\begin{align*}
    \sum_{\lambda_k \geq \frac{|\omega|}{3}} (|\omega| - \lambda_k)^2 \lambda_k^2 |u_k|^2 & \geq \frac{1}{16} \sum_{\lambda_k \geq \frac{|\omega|}{3}} (|\omega|^2 - \lambda^2_k)^2 |u_k|^2 \\
    & = \frac{1}{16} \lVert (A_0 - \omega^2)u^0 \rVert^2_{H},
\end{align*}
and
\begin{align*}
    \sum_{\lambda_k < \frac{|\omega|}{3}} (|\omega| - \lambda_k)^2 \lambda_k^2 |u_k|^2 & \geq \frac{4\omega^2}{9} \sum_{\lambda_k < \frac{|\omega|}{3}} \lambda^2_k |u_k|^2 \\
    & \geq \frac{\omega^2}{9} \sum_{\lambda_k < \frac{|\omega|}{3}} \lambda^2_k |u_k|^2 +3  \sum_{\lambda_k < \frac{|\omega|}{3}} \lambda^4_k |u_k|^2  = \frac{1}{9}\omega^2\lVert A_0^{\frac{1}{2}} \tilde u \rVert^2_{H}+3 \lVert A_0 \tilde{u} \rVert^2_{H}.
\end{align*}
\end{proof}
\begin{lemma}\label{kkk}
There exists $ \tilde c_0 = c_0(\|C_0\|) \in (0,1)$ such that the following inequality holds
    \begin{align} \label{interm5}
  \lVert C_0u \rVert^2_{Y} \geq \tilde c_0 \lVert C_0u^0 \rVert^2_{Y} - 2   \lVert A_0 \tilde{u} \rVert^2_{H},
  \end{align}
  for all  $u \in D(A_0)$ with $u_0$ and $\tilde u$ are defined in \eqref{interm2}-\eqref{interm3}.

\end{lemma}
\begin{proof}
Using the fact that $Y$ is a Hilbert space, we have 
\begin{align*}
   \lVert C_0u \rVert^2_{Y} = 
    \lVert C_0(u^0 + \tilde{u})\rVert^2_{Y} 
    = \lVert C_0u^0 \rVert^2_{Y} + \lVert C_0\tilde{u}\rVert^2_{Y} + 2\Re\left(\langle C_0u^0, C_0 \tilde{u} \rangle_Y\right).
\end{align*}
\\
Let  $  \varepsilon \in (0, 1)$ be fixed.  Using the Young-inequality, we obtain
\begin{align*}
    \lVert C_0u^0\rVert^2_{Y} + \lVert C_0\tilde{u}\rVert^2_{Y} + 2 \Re\left( \langle C_0u^0, C_0\tilde{u} \rangle_Y\right)
    & \geq \lVert C_0u^0\rVert^2_{Y} + \lVert C_0\tilde{u} \rVert^2_{Y} - \varepsilon^2 \lVert C_0u^0\rVert^2_{Y} - \frac{1}{\varepsilon^2} \lVert C_0\tilde{u}\rVert^2_{Y} \\ 
    & = (1- \varepsilon^2) \lVert C_0u^0\rVert^2_{Y} -(\frac{1}{\varepsilon^2}-1) \lVert C_0\tilde{u}\rVert^2_{Y}.
\end{align*}

Since  $C_0$ is a linear bounded operator from $D(A_0)$ to $Y$, we have 
   \begin{align*}
    \lVert C_0\tilde{u}\rVert^2_{Y} \leq \|C_0\|^2\lVert \tilde{u} \rVert^2_{H_2} = \|C_0\|^2 \lVert A_0 \tilde{u} \rVert^2_{H}.
\end{align*}

Consequently 
\begin{align*}
    \lVert C_0u\rVert^2_{Y} \geq (1- \varepsilon^2) \lVert C_0u^0\rVert^2_{Y} -(\frac{1}{\varepsilon^2}-1) \|C_0\|^2 \lVert A_0 \tilde{u} \rVert^2_{H}.
    \end{align*}

Now taking $\varepsilon = (1+2\|C_0\|^{-2})^{-\frac{1}{2}}$, we obtain the desired inequality, with $\tilde c_0 = 2(2+\|C_0\|^{-2})^{-1}$.
\\
\end{proof}

\noindent Combining  inequalities \eqref{interm6}, \eqref{interm5},  we obtain

    \begin{align}
\sum_{k \in \mathbb{N}^{*}}
\bigl(|\omega|-\lambda_k\bigr)^2 \lambda_k^2 |u_k|^2
+\|C_0u\|_{Y}^{2}
&\geq
\frac{1}{16}\|(A_0-\omega^2 I)u^0\|_{H}^{2}
+\tilde c_0 \|C_0u^0\|_{Y}^{2}
+\frac{1}{9}\omega^2\|A_0^{1/2}\tilde u\|_{H}^{2}
+\|A_0\tilde u\|_{H}^{2}
\nonumber\\
&\geq
\min\!\left(\frac{1}{16},\tilde c_0\right)
\left(
\|(A_0-\omega^2 I)u^0\|_{H}^{2}
+\|C_0u^0\|_{Y}^{2}
\right)
\nonumber\\
&\qquad
+\frac{1}{9}
\left(
\|A_0^{1/2}\tilde u\|_{H}^{2}
+\omega^2\|\tilde u\|_{H}^{2}
\right).
\label{interm8}
\end{align}

Applying  the inequality \eqref{stabestt1}  to $u_0$, we get 

\begin{equation} 
 \delta_0(\omega) \Big (\lVert A_0^{\frac{1}{2}} u_0 \rVert^2_{H} + \omega^2  \lVert u_0 \rVert^2_{H} \Big ) \leq  \lVert (A_0 - \omega^2I)u_0 \rVert^2_{H} + \lVert C_0 u_0 \rVert^2_{Y }. \label{interm70}
    \end{equation}

Using inequalities \eqref{interm70} and \eqref{interm8}, we find  
\begin{align}
\sum_{k \in \mathbb{N}^{*}}
\bigl(|\omega|-\lambda_k\bigr)^2 \lambda_k^2 |u_k|^2
+\|C_0u\|_{Y}^{2}
&\geq
\min\!\left(\frac{1}{16},\tilde c_0\right)
\delta_0(\omega)
\left(
\|A_0^{1/2}u^0\|_{H}^{2}
+\omega^2\|u^0\|_{H}^{2}
\right)
\nonumber\\
&\qquad
+\frac{1}{9}
\left(
\|A_0^{1/2}\tilde u\|_{H}^{2}
+\omega^2\|\tilde u\|_{H}^{2}
\right)
\nonumber\\
&\geq
\min\!\left(
\min\!\left(\frac{1}{16},\tilde c_0\right)\delta_0(\omega),
\frac{1}{9}
\right)
\left(
\|A_0^{1/2}(u^0+\tilde u)\|_{H}^{2}
+\omega^2\|u^0+\tilde u\|_{H}^{2}
\right)
\nonumber\\
&\geq
\tilde{\delta}(\omega)
\left(
\|A_0^{1/2}u\|_{H}^{2}
+\omega^2\|u\|_{H}^{2}
\right),
\label{interm11}
\end{align}
where $\tilde \delta(\omega) = \min\left(\min (\frac{1}{16}, \tilde c_0)\delta_0(\omega), \frac{1}{9}\right)$. Since $\delta_0\in \Sigma_d$,
we have $\tilde \delta \in \Sigma_d$ as well.\\

\noindent Now, we distinguish  two cases:\\

 \noindent  {\it Case 1.} {$\frac{1}{2} \lVert v \rVert^2_{H} <  \lVert  \omega u \rVert^2_{H}$}.\\

Inequalities \eqref{interm7} and  \eqref{interm11},  then lead to
\begin{align*}
    \left \lVert (A - wI)z \right \rVert^{2}_{X} + \left \lVert Cz \right \rVert^{2}_{Y} \geq 
    \frac{1}{3} \tilde \delta(\omega)  \Big( \lVert A_0^{\frac{1}{2}} u \rVert^2_{H}  + \lVert v \rVert^2_{H} \Big ) =\delta(\omega) \lVert z \rVert^2_{X},
\end{align*} 
where $\delta = \frac{1}{3}\tilde \delta$.

\noindent  {\it Case 2: } {$\frac{1}{2} \lVert v \rVert^2_{H} \geq  \lVert \omega u \rVert^2_{H}$}. \\

 We first  deduce the following inequality 
 \begin{equation} \label{interm12}
 \lVert v -\omega u \rVert^2_{H} \geq \frac{1}{2} \lVert v \rVert^2_{H}.
 \end{equation}

 Thus, we obtain:
\begin{align*} \rVert (A-\omega I)z \lVert^{2}_{X} 
&\geq \rVert A_0^{\frac{1}{2}} (v_i - wu_r) \lVert^{2}_{H} + \rVert A_0 u_r - \omega v_i \lVert^{2}_{H} \\
& \geq \rVert A_0^{\frac{1}{2}} (v - wu) \lVert^{2}_{H} \\
& \geq \lambda_1^2 \rVert  v - wu \lVert^{2}_{H},
\end{align*}
which combined with \eqref{interm12}, give
\begin{align*} \rVert (A-\omega I)z \lVert^{2}_{X} \geq \frac{\lambda_1^2}{2} \lVert v \rVert^2_{H}.
\end{align*}

Hence 
\begin{align*}
    \left \lVert (A - \omega I)z \right \rVert^{2}_{X} + \left \lVert Cz \right \rVert^{2}_{Y} \geq  \frac{\tilde \delta(\omega)}{2}  \Big( \lVert A_0^{\frac{1}{2}} u \rVert^2_{H} + \omega^2  \lVert u \rVert^2_{H} \Big ) + \frac{\lambda_1^2}{4}\lVert v \rVert^2_{H}
\end{align*}
\\
\\
From the previous inequalities, we deduce that for all $\omega \in \mathbb{R}$, we have  
\begin{align*}
    \left \lVert (A - \omega I)z \right \rVert^{2}_{X} + \left \lVert Cz \right \rVert^{2}_{Y} \geq  \delta(\omega)  \Big( \lVert A_0^{\frac{1}{2}} u \rVert^2_{H}  +\lVert v \rVert^2_{H} \Big ),
\end{align*}
where $\delta(\omega) =\min(\frac{\tilde \delta(\omega)}{2},\frac{\lambda_1^2}{4})$. It is forward to verify that 
$\delta$ lies  indeed in $\Sigma_d$.\\

In both cases, we proved the existence of a positive  function $\delta \in \Sigma_d$ such that the following inequality holds
\begin{align*} 
\delta(\omega) \left \lVert z \right \rVert^{2}_{X} \leq \left \lVert (A - wI)z \right \rVert^{2}_{X} + \left \lVert Cz \right \rVert^{2}_{Y},
 \end{align*}
for all $\omega \in \mathbb R$ and $z\in D(A)$, which finishes the proof of Theorem \ref{observabilitywavee}.

\end{proof}

\noindent The following is the main theorem of this section.
\begin{theorem}\label{main1}
  Assume that there exist functions $\Psi_0$, $\varepsilon_0$ $\in \Sigma_d$ such that the following resolvent inequality holds:

    \begin{equation}\label{resolineq1}
        \omega^2\lVert u \rVert^{2}_{H} + \lVert A_0^{\frac{1}{2}}u \rVert_H^2 \leq \sup \Big\{ \frac{\lVert C_0u \rVert^{2}_{Y}} {\Psi_0(\omega)}  , \frac{\lVert (A_0 -\omega^2I)u \rVert^{2}_{H}}{ \varepsilon_0(\omega)} \Big \}, \quad \forall \omega \in \mathbb{R}, \;\forall u \in D(A_0) \setminus \{0\},
    \end{equation} 

Then, the system \eqref{observability1} is weakly observable. That is, there exist functions $\Psi$, $\varepsilon$ $\in \Sigma_d$ such that the following resolvent inequality holds:

    \begin{equation}\label{resolineq}
        \lVert z \rVert^{2}_{X} \leq \sup \Big\{ \frac{\lVert Cz \rVert^{2}_{Y}} {\Psi(\lambda_{|A|}(z))}  , \frac{\lVert (A -\lambda I)z \rVert^{2}_{X}}{(\lambda - \lambda_A(z))^{2} + \varepsilon(\lambda_{|A|}(z))} \Big \},  \quad  \forall \lambda \in \mathbb{R}, \; \forall z \in D(A) \setminus  \{0\}.
    \end{equation}
where  $\lambda_{A}(z)$  (resp. $\lambda_{|A|}(z)$) is  the $A$-frequency  (resp. $|A|$-frequency)of  $z$ defined in  Definition \ref{Afrequency} (resp. \eqref{Afrequency}).
\end{theorem}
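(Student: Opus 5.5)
Our plan is to pass through the Hautus-type estimate of Theorem \ref{observabilitywavee}. First, using $\sup(a,b)\le a+b$, hypothesis \eqref{resolineq1} gives
\[
\min\bigl(\Psi_0(\omega),\varepsilon_0(\omega)\bigr)\Bigl(\omega^2\lVert u\rVert_H^2+\lVert A_0^{\frac12}u\rVert_H^2\Bigr)\le \lVert (A_0-\omega^2 I)u\rVert_H^2+\lVert C_0u\rVert_Y^2 .
\]
This is \eqref{stabestt1} with $\delta_0=\min(\Psi_0,\varepsilon_0)\in\Sigma_d$. The inequality extends from $D(A_0)$ to $H_2=D(A_0)$ trivially. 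Theorem \ref{observabilitywavee} then yields $\delta\in\Sigma_d$ with
\[
\delta(\lambda)\lVert z\rVert_X^2\le \lVert (A-\lambda I)z\rVert_X^2+\lVert Cz\rVert_Y^2 \quad \text{for all } \lambda\in\mathbb R,\ z\in D(A).
\]

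Next we deduce the spectral coercivity of $C$ in the sense of Definition \ref{spectralcoercivity}. Fix $z\in D(A)\setminus\{0\}$ and choose $\lambda=\lambda_A(z)$. By the Remark after Definition \ref{Afrequency}, $\lVert (A-\lambda_A(z)I)z\rVert_X^2=\bigl(\lVert Az\rVert_X^2/\lVert z\rVert_X^2-\lambda_A^2(z)\bigr)\lVert z\rVert_X^2$. Suppose this quantity is at most $\varepsilon(\lambda_{|A|}(z))\lVert z\rVert_X^2$. Since $\delta$ is even and non-increasing and $|\lambda_A(z)|\le\lambda_{|A|}(z)$, we have $\delta(\lambda_A(z))\ge\delta(\lambda_{|A|}(z))$. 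Hence
\[
\lVert Cz\rVert_Y^2\ge \bigl(\delta(\lambda_{|A|}(z))-\varepsilon(\lambda_{|A|}(z))\bigr)\lVert z\rVert_X^2 .
\]
Choosing $\varepsilon=\tfrac12\delta$ and $\Psi=\tfrac12\delta$, both in $\Sigma_d$, gives spectral coercivity.

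Finally, Theorem \ref{spectralcrv} converts spectral coercivity into the resolvent inequality \eqref{resolineq}, possibly with modified functions $\Psi,\varepsilon\in\Sigma_d$. Theorem \ref{spectralcrvv1} then gives weak observability. This is exactly the claimed statement.

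The main obstacle is the monotonicity step relating the frequency $\lambda_A(z)$ at which the Hautus estimate is evaluated to the $|A|$-frequency $\lambda_{|A|}(z)$ used in the conclusion. Evenness of $\delta$ together with $|\lambda_A(z)|\le\lambda_{|A|}(z)$ handles it. If one prefers not to invoke Theorem \ref{spectralcrv}, one can instead prove \eqref{resolineq} directly. Take an arbitrary $\lambda$ and expand $\lVert (A-\lambda I)z\rVert_X^2=\lVert (A-\lambda_A(z)I)z\rVert_X^2+(\lambda-\lambda_A(z))^2\lVert z\rVert_X^2$; the cross term vanishes by the definition of $\lambda_A(z)$. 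Then split into the cases where $\lVert (A-\lambda_A(z)I)z\rVert_X^2$ is at most, or exceeds, $\varepsilon(\lambda_{|A|}(z))\lVert z\rVert_X^2$. In the first case coercivity gives $\lVert z\rVert_X^2\le\lVert Cz\rVert_Y^2/\Psi(\lambda_{|A|}(z))$. In the second case $\lVert (A-\lambda I)z\rVert_X^2\ge\bigl((\lambda-\lambda_A(z))^2+\varepsilon(\lambda_{|A|}(z))\bigr)\lVert z\rVert_X^2$.
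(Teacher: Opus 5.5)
Your proposal is correct and follows essentially the same route as the paper. You set $\delta_0=\min(\Psi_0,\varepsilon_0)$, invoke Theorem~\ref{observabilitywavee}, evaluate the resulting Hautus estimate at $\lambda=\lambda_A(z)$ using evenness and monotonicity of $\delta$ together with $|\lambda_A(z)|\le\lambda_{|A|}(z)$ to obtain spectral coercivity, and conclude with Theorem~\ref{spectralcrv}. Two minor refinements are harmless: in the coercivity step you compare $\delta(\lambda_A(z))\ge\delta(\lambda_{|A|}(z))$ before subtracting $\varepsilon=\delta/2$, which is slightly cleaner than the paper's $\delta/4$ step, and your identity $\lVert (A-\lambda I)z\rVert_X^2=\lVert (A-\lambda_A(z)I)z\rVert_X^2+(\lambda-\lambda_A(z))^2\lVert z\rVert_X^2$ is a simpler substitute for the Appendix's calculus proof of Theorem~\ref{spectralcrv}.
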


\begin{proof}

\noindent Recall that  $X = H_1 \times H$ is  a Hilbert space equipped with the following inner product:

$$
 \Big\langle (u,v), (u_1,v_1) \Big\rangle_X = \langle u,u_1\rangle_{H_1} + \langle v,v_1\rangle_{H} = \langle A_0^{\frac{1}{2}}u,A_0^{\frac{1}{2}}u_1\rangle_{H} + \langle v,v_1\rangle_{H}. 
$$

The inequality \eqref{resolineq1}    implies 
   
\begin{equation} \min{( \varepsilon_0(\omega),\Psi_0(\omega))} \left(\omega^2\lVert u \rVert^{2}_{H} + \lVert A_0^{\frac{1}{2}}u \rVert_H^2 \right) \leq \lVert (A_0 -\omega^2I)u \rVert^{2}_{H} + \lVert C_0u \rVert^{2}_{Y}.
    \end{equation}

\noindent Denote $\delta_0(\omega) =  \min{\Big( \varepsilon_0(\omega),\Psi_0(\omega)\Big)}$, which by the way belongs to  $\Sigma_d$ since both $\varepsilon_0$, $\Psi_0$  lie in $ \Sigma_d$. \\

Back to the proof of Theorem \ref{main1}. Inequality \eqref{resolineq1}  implies 

\begin{equation}\label{stabestt}    
\delta_0(\omega) \Big (\omega^2  \lVert u \rVert^2_{H} + \lVert A_0^{\frac{1}{2}} u \rVert^2_{H}  \Big ) \leq  \lVert (A_0 - \omega^2I)u \rVert^2_{H} + \lVert C_0 u \rVert^2_{Y }, \;\; \forall \omega \in \mathbb R, \; \forall u\in H_2, \end{equation}
    where $\delta_0^{-1}(\omega) = \sup\left(\frac{1}{\Psi_0(\omega)}, \frac{1}{\varepsilon_0(\omega)} \right)$. Since
    $\Psi_0$ and $\varepsilon_0$ belong to $\Sigma_d$, we have $\delta_0 \in \Sigma_d$. \\
    
Now using the results of Theorem \ref{observabilitywavee}, we deduce the existence of $\delta\in \Sigma_d$ such that 

 \begin{align}  \label{interm13}
\delta(\lambda) \left \lVert z \right \rVert^{2}_{X} \leq \left \lVert (A - \lambda I)z \right \rVert^{2}_{X} + \left \lVert Cz \right \rVert^{2}_{Y},
 \end{align}
for all $\lambda \in \mathbb R$ and   $z\in D(A)$. \\
    
Next,  we shall show that  the operator $C$ is spectrally coercive. \\

 Let  $z\in D(A)\setminus\{0\}$,  $\widetilde \varepsilon(\lambda)=\frac{\delta(\lambda)}{4}$, $\widetilde \Psi(\lambda)=\frac{\delta(\lambda)}{4}$, and assume 

\begin{equation}\label{interm14}
\lVert (A - \lambda_A(z) I)z \rVert^{2}_{X}  \leq \widetilde \varepsilon(\lambda_{|A|}(z))\lVert z \rVert^{2}_{X}.
\end{equation}

Combining \eqref{interm13} with $\lambda = \lambda_A(z)$, and \eqref{interm14}, we deduce 

\begin{equation*}
    \lVert Cz \rVert^{2}_{Y} \geq \widetilde \Psi(\lambda_A(z)) \lVert z \rVert^{2}_{X}.
\end{equation*}
Since $|\lambda_A(z)| \leq \lambda_{|A|}(z)$, and $\Psi$ is even, and  decreasing  on $\mathbb R_+$, we also have 
\begin{equation*}
    \lVert Cz \rVert^{2}_{Y} \geq \widetilde \Psi(\lambda_{|A|}(z)) \lVert z \rVert^{2}_{X}.
\end{equation*}
By construction $\widetilde \varepsilon, \widetilde \Psi \in \Sigma_d$, and by consequence  the previous inequality 
shows that $C$ is spectrally coercive.\\

\noindent By Theorem \ref{spectralcrv}, there exist functions $\Psi$, $\varepsilon$ $\in \Sigma_d$ such that for all $\omega \in \mathbb{R}$ and for all $z \in D(A) \setminus  \{0\}$ the following resolvent inequality holds

    \begin{equation*}
        \lVert z \rVert^{2}_{X} \leq \sup \Big\{ \frac{\lVert Cz \rVert^{2}_{Y}} {\Psi(\lambda_{|A|}(z))}  , \frac{\lVert (A -\lambda I)z \rVert^{2}_{X}}{(\lambda - \lambda_A(z))^{2} + \varepsilon( \lambda_{|A|}(z))} \Big \}, \quad \forall \lambda \in \mathbb R.
    \end{equation*}


\end{proof}

\begin{corollary} \label{maincorollary}
  Assume that there exist functions $\Psi_0$, $\varepsilon_0$ $\in \Sigma_d$ such that the following resolvent inequality holds:

    \begin{equation*}
        \omega^2\lVert u \rVert^{2}_{H} + \lVert A_0^{\frac{1}{2}}u \rVert_H^2 \leq \sup \Big\{ \frac{\lVert C_0u \rVert^{2}_{Y}} {\Psi_0(\omega)}  , \frac{\lVert (A_0 -\omega^2I)u \rVert^{2}_{H}}{ \varepsilon_0(\omega)} \Big \}, \quad \forall \omega \in \mathbb{R}, \;\forall u \in D(A_0) \setminus \{0\},
    \end{equation*}

 Then,  there exists a pair of  functions $\mathtt T \in \Sigma_i$  and $\Psi \in \Sigma_d$   such that following weak
 observation inequality for the system  \eqref{observability1} holds:

     \begin{equation*}
       \Psi\Big(\lambda_{|A|}\left((u_0,u_1)^t\right)\Big) \left(\|A_0^{\frac{1}{2}}u_0\|_H^2+\|u_1\|_H^2 \right) \leq    \int_0^{\mathtt T\left(\lambda_{|A|}\left((u_0,u_1)^t\right) \right)} {\lVert C_0u(t)\rVert^{2}_{Y}} dt ,\quad
 \forall (u_0, u_1) \in H_2 \times H_1,
    \end{equation*}
where $\lambda_{|A|}\left( (u_0,u_1)^t \right)=  \left(\|A_0 u_0\|_H^2+\|A_0^{\frac{1}{2}}u_1\|_H^2\right) \left(\|A_0^{\frac{1}{2}}u_0\|_H^2+\|u_1\|_H^2\right)^{-1}.$
\end{corollary}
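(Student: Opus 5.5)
The plan is to chain the results already proved. No new analysis should be needed beyond translating the abstract statements into the variables $(u_0,u_1)$.

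First, I would invoke Theorem~\ref{main1}. The hypothesis of the corollary is exactly the resolvent inequality \eqref{resolineq1}, so there exist $\Psi,\varepsilon\in\Sigma_d$ for which \eqref{resolineq} holds. Inside that proof it is in fact shown that $C$ is spectrally coercive, with $\widetilde\varepsilon=\widetilde\Psi=\delta/4$, where $\delta$ is the function given by Theorem~\ref{observabilitywavee}. Alternatively, Theorem~\ref{spectralcrv} converts \eqref{resolineq} back into spectral coercivity. Either route gives assertion $(i)$ of Theorem~\ref{spectralcrvv1}.

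Next, I would apply Theorem~\ref{spectralcrvv1}, $(i)\Rightarrow(ii)$. For every $z_0\in D(A)$ this yields
\[
\theta_2\,\Psi\!\left(\theta_0\Big(\tfrac1T+\lambda_{|A|}(z_0)\Big)\right)\|z_0\|_X^2\le\int_0^T\|Cz(t)\|_Y^2\,dt
\]
for all $T\ge \mathtt T(\lambda_{|A|}(z_0))$, where $\mathtt T\in\Sigma_i$ solves \eqref{Eq:T}. I would take $T=\mathtt T(\lambda_{|A|}(z_0))$. Because $\mathtt T$ is non-decreasing and positive, $1/T\le 1/\mathtt T(0)$. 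Since $\Psi$ is non-increasing on $\mathbb R_+$, this gives the lower bound
\[
\widehat\Psi(\lambda):=\theta_2\,\Psi\big(\theta_0(\mathtt T(0)^{-1}+|\lambda|)\big),
\]
which is even, continuous, positive and non-increasing in $|\lambda|$, so $\widehat\Psi\in\Sigma_d$. This is precisely the remark following Theorem~\ref{spectralcrvv1}, namely that \eqref{observability4} implies \eqref{observability3}.

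Finally, I would translate the statement. With $z_0=(u_0,u_1)^t\in D(A)=H_2\times H_1$, the norm is $\|z_0\|_X^2=\|A_0^{1/2}u_0\|_H^2+\|u_1\|_H^2$. The semigroup $e^{itA}z_0$ equals $(u(t),\dot u(t))^t$, so $Cz(t)=C_0u(t)$. It remains to identify $\lambda_{|A|}(z_0)$ with the stated quotient. Expanding in the eigenbasis $\psi_k$ with eigenvalues $\pm\lambda_k$, one computes $\||A|z_0\|_X^2=\|A_0u_0\|_H^2+\|A_0^{1/2}u_1\|_H^2$.

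\textbf{Main obstacle.} The hardest step is this last identification, because the stated formula is the ratio $\||A|z_0\|^2/\|z_0\|^2$, which is the square-mean analogue rather than $\langle|A|z_0,z_0\rangle/\|z_0\|^2$ as defined in Definition~\ref{|A|frequency}. By Jensen's inequality, $\lambda_{|A|}(z_0)^2\le \||A|z_0\|^2/\|z_0\|^2$. I would therefore handle the discrepancy by monotonicity, interpreting the stated quantity as an upper bound $\Lambda$ for the appropriate frequency. One replaces $\widehat\Psi$ and $\mathtt T$ by $\widehat\Psi(\sqrt{\Lambda})\le \widehat\Psi(\lambda_{|A|})$ and $\mathtt T(\sqrt\Lambda)\ge\mathtt T(\lambda_{|A|})$. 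Enlarging the time interval only increases the integral, since the integrand is nonnegative. The composed functions stay in $\Sigma_d$ and $\Sigma_i$ respectively, and the inequality follows with the displayed quotient.
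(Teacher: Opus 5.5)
Your proposal is correct and follows the paper's route, which is simply "direct consequence of Theorems~\ref{spectralcrvv1} and \ref{main1}". You use Theorem~\ref{main1} (via spectral coercivity) and then Theorem~\ref{spectralcrvv1} at $T=\mathtt T(\lambda_{|A|}(z_0))$, together with the monotonicity bound $1/T\le 1/\mathtt T(0)$, and translate using $e^{itA}z_0=(u(t),\dot u(t))^t$. You also note that the displayed quotient is $\||A|z_0\|_X^2/\|z_0\|_X^2$ rather than $\lambda_{|A|}(z_0)$, and repair this via $\lambda_{|A|}(z_0)^2\le\||A|z_0\|_X^2/\|z_0\|_X^2$ and composing $\Psi$, $\mathtt T$ with $\sqrt{|\cdot|}$; this is a valid fix of a point the paper's one-line proof passes over.
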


\begin{proof}
The results of the corollary is a direct consequence of Theorems \ref{spectralcrvv1} and  \ref{main1}. 
\end{proof}

\begin{remark}
The functions $\Psi$ and $\mathtt T$ appearing in Corollary \ref{maincorollary} can be determined explicitly in terms of the functions $\Psi_0$ and $\varepsilon_0$ by tracing the proof of Theorem \ref{main1}. In the next section, we provide several examples illustrating such constructions.

\end{remark}

\section{Boundary observability of the wave equation in a rectangle} \label{section4}

Let $\Omega = (0,a\pi) \times  (0,b\pi)$ where $a, b>0$ are  given constants. We denote by $\partial\Omega$ the boundary of $\Omega$, and by $n$ the outward unit normal vector on $\partial\Omega$. We consider the following initial-boundary value problem:

\begin{equation}\label{waveequation}
\left\{
\begin{aligned}
    &\ddot{u}(x,t) + A_0 u(x,t) = 0
    && x\in\Omega,\quad t>0, \\
    &u(x,t)=0
    && \text{on } \partial\Omega \times \mathbb{R}_{+}, \\
    &u(x,0)=u_0(x), \qquad
    \dot{u}(x,0)=v_0(x),
\end{aligned}
\right.
\end{equation}
where $A_0= -\Delta $ denote  the Dirichlet Laplacian on the Hilbert space $H=L^2(\Omega)$. Its associated scale of spaces is given by
\[
H_1=H_0^1(\Omega), \qquad
H_2=D(A_0)=H^2(\Omega)\cap H_0^1(\Omega).
\]
Let $\Gamma$ be a nonempty open subset of $\partial\Omega$, and set
\[
Y=L^2(\Gamma).
\]
We define the observation operator $C_0:\;  H\to Y$ by
\[
C_0u=\frac{\partial u}{\partial n}\Big|_{\Gamma},
\]
where $\partial u/\partial n$ denotes the outward normal derivative of $u$ on $\Gamma$. One can verify that the  linear operator  $C_0 $ is bounded from $D(A_0)$  onto $L^2(\Gamma)$. \\

\noindent We consider here the inverse problem of recovering the initial state $(u_0,v_0) \in X= H_0^1(\Omega) \times L^2(\Omega)$ of the wave equation \eqref{waveequation}  from the observation $C_0 u(t) $ for $t\in (0, T)$ with $T>0$ large enough.  The wave equation \eqref{waveequation} can be reformulated in the first evolution system \eqref{observability1}. Here  $X = H_0^1(\Omega) \times L^2(\Omega$) is the  Hilbert space with scalar product:
$$
\langle z, z' \rangle_{H_0^1(\Omega) \times L^2(\Omega)} = \Big\langle (u,v), (u_1,v_1) \Big\rangle_{H_0^1(\Omega) \times L^2(\Omega)}  = \langle \nabla u,\nabla u_1\rangle_{L^2(\Omega)} + \langle v,v_1\rangle_{L^2(\Omega)}. 
$$

\noindent  Since $A_0 = -\Delta$, we have $A = -i \begin{pmatrix} 0 & I \\ \Delta & 0 \end{pmatrix}$ : D($A$) $\subset$ $X$ $\rightarrow$ $X$ is a linear bounded self-adjoint operator with a compact resolvent. Hence, the operator $iA$ generates a strongly continuous group of isometries in $X$ denoted by $(e^{itA})_{t {\in \mathbb{R}} }$ \cite{TW}.

\begin{equation}\label{obsrvoperator}
y(t)= Cz(t) = C\begin{pmatrix} u(t) \\ v(t)\end{pmatrix}(t) =(C_0  \ 0)\begin{pmatrix} u(t) \\ v(t)\end{pmatrix} \;= \;\frac {\partial u }{\partial n }\Big |_\Gamma. 
\end{equation}

Since $ D(A) = H^2(\Omega) \cap H_0^1(\Omega) \times H_0^1(\Omega)$, the observability operator $C : D(A) \to Y := L^2(\Gamma)$, defined by $\eqref{obsrvoperator}$, is a bounded operator. In addition, $C$ is an admissible observability operator, that is, for any $T>0$ there exists a constant $C_T >0$, such that the following inequality holds
\cite{Li}:

\begin{equation}\label{observabilityy}
        \int_0^T \int_\Gamma  \Big |\frac{\partial u}{\partial n}      \Big |^{2} dx dt \leq C_{T} \int_\Omega (| \nabla u_0 | ^2 + | v_0 |^2) dx, \quad 
 \forall z_{0}=(u_0,v_0) \in D(A).
\end{equation}

\noindent Next, we derive observability inequalities corresponding to different choices of  the observability set $\Gamma$.
\\

\textbf{Assumption I :} We assume that $\Gamma$ contains at least two touching sides of $\Omega$.\\

Consider the Helmholtz equation defined by

\begin{equation}\label{helmotzequationn}
\left\{ \begin{array}{lllcc}
    \Delta \phi + \omega^2 \phi  = f , \quad x \in \Omega, \\
    \phi = 0 \quad x \in \ \partial \Omega \setminus \overline{\Gamma}, \\
    \frac{\partial \phi }{\partial n}  - i \omega \phi = g \quad x \in \Gamma,
\end{array}\right.
\end{equation}
where $g \in L^{2}(\Gamma)$ and $f \in L^{2}(\Omega)$.

\begin{theorem}
    The system \eqref{waveequation} is exactly observable meaning that there exist  constants $ c = c(\Gamma, \|C_0\|)>0$ and $T_0= T_0(\Gamma, \|C_0\|) > 0$ such that the  inequality
     \begin{equation} \label{obscarre}
   c \Big ({\lVert\nabla u_{0} \rVert^{2}_{L^2(\Omega)}} + \lVert v_0 \rVert^{2}_{L^2(\Omega)}\Big) \leq    \int_0^{T} \int_{\Gamma} \Big| \frac{\partial u}{\partial n} \Big|^2 ds(x) dt, 
    \end{equation}
holds  for all $z_{0}=(u_0,v_0) \in  H^2(\Omega) \cap H_0^1(\Omega) \times H_0^1(\Omega)$ and for all $T \geq T_0$. \end{theorem}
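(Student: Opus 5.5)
The plan is to verify the hypothesis \eqref{resolineq1} of Theorem \ref{main1} (equivalently \eqref{stabestt1} of Theorem \ref{observabilitywavee}) with functions $\Psi_0,\varepsilon_0$ that are bounded below by positive constants. We then follow the chain Theorem \ref{observabilitywavee} $\Rightarrow$ spectral coercivity $\Rightarrow$ Theorem \ref{spectralcrvv1}, checking that constant $\delta_0$ propagates to constant $\varepsilon,\Psi$ and hence to a bounded $\mathtt T$. Concretely, the goal is a uniform resolvent estimate
\[
c_1\Big(\omega^2\|u\|^2_{L^2(\Omega)}+\|\nabla u\|^2_{L^2(\Omega)}\Big)\leq \|(\Delta+\omega^2)u\|^2_{L^2(\Omega)}+\Big\|\frac{\partial u}{\partial n}\Big\|^2_{L^2(\Gamma)},\qquad \forall \omega\in\mathbb R,\ \forall u\in H^2(\Omega)\cap H^1_0(\Omega).
\]
We prove it by the Rellich multiplier method, which is natural in a rectangle. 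Since $\Gamma$ contains two touching sides, after translation we may place their common vertex at the origin so that $x\cdot n\le 0$ on $\partial\Omega\setminus\Gamma$, while $x\cdot n\geq0$ and is bounded by $\max(a,b)\pi$ on $\Gamma$.

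\textbf{Key steps.} Set $f=(\Delta+\omega^2)u$.
\begin{enumerate}
\item Multiply $f$ by $x\cdot\nabla\bar u$, integrate, and take real parts. Rellich's identity in two dimensions gives
\[
\|\nabla u\|^2 = \int_{\partial\Omega}(x\cdot n)\Big|\frac{\partial u}{\partial n}\Big|^2ds-2\Re\int_\Omega f\,x\cdot\nabla\bar u\,dx .
\]
The $\omega^2$ term cancels in dimension two: $\int\omega^2 u\,x\cdot\nabla\bar u$ contributes $-\omega^2\|u\|^2$, and the boundary terms vanish because $u|_{\partial\Omega}=0$, so $\nabla u=\frac{\partial u}{\partial n}n$ there.
\item Multiply $f$ by $\bar u$ to get $\omega^2\|u\|^2=\|\nabla u\|^2+\Re\int f\bar u$. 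This controls $\omega^2\|u\|^2$ by $\|\nabla u\|^2+\|f\|\|u\|$.
\item In step 1, drop the nonpositive boundary contribution from $\partial\Omega\setminus\Gamma$. Bound the $\Gamma$ contribution by $\max(a,b)\pi\,\|C_0u\|^2$, and the source term by $2\max(a,b)\pi\|f\|\|\nabla u\|$, absorbing it with Young's inequality. For $\|f\|\|u\|$ with $|\omega|\ge1$, use $\|u\|\le\|\omega u\|$ and absorb again. For $|\omega|<1$, use instead $\|\nabla u\|\ge\lambda_1\|u\|$ from Poincaré. Together these yield the displayed estimate with constant $c_1$. Hence $\delta_0\equiv\min(c_1,\cdot)$ is a positive constant in $\Sigma_d$.
\end{enumerate}

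\textbf{Propagation and the expected obstacle.} Tracing Theorem \ref{observabilitywavee} with constant $\delta_0$ gives constant $\tilde\delta$ and $\delta$ (they depend only on $\tilde c_0(\|C_0\|)$ and $\lambda_1$). The proof of Theorem \ref{main1} then gives constant $\tilde\varepsilon=\tilde\Psi=\delta/4$ in the spectral coercivity. With constant $\varepsilon$, equation \eqref{Eq:T} has solution $T=\theta_1/\varepsilon$ independent of $\lambda$. Moreover $\Psi(\theta_0(1/T+\lambda))$ is a constant, so \eqref{observability4} becomes \eqref{obscarre} with $c=\theta_2\Psi$ for every $T\ge T_0:=\theta_1/\varepsilon$. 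The main obstacle is step 1: getting the sign of $x\cdot n$ right on the unobserved sides and handling the corner regularity. The identity is justified for $u\in H^2(\Omega)\cap H^1_0(\Omega)$ on a convex polygon by density of smooth functions vanishing near the corners, or directly through the sine-series expansion on the rectangle. The second delicate point is making sure that in Theorem \ref{spectralcrv} constant input functions yield constant output functions.
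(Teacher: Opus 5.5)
Your argument follows the paper's proof. The paper takes the uniform estimate $c_1(\|\nabla u\|^2+\omega^2\|u\|^2)\le\|(\Delta+\omega^2)u\|^2+\|\partial_n u\|^2_{L^2(\Gamma)}$ from a Rellich-identity lemma in the literature. It then runs Theorem~\ref{observabilitywavee} to get a constant Hautus bound $c_2$, uses $\Psi=\varepsilon=c_2/2$ in the spectral coercivity, and reads off $T_0=2\theta_1/c_2$, $c=\theta_2c_2/2$ from Theorem~\ref{spectralcrvv1}. This is exactly your propagation of constants. The only difference is that you prove the resolvent estimate yourself with the multiplier $x\cdot\nabla\bar u$. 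That is legitimate, and it even checks hypothesis \eqref{stabestt1} directly on $H^2(\Omega)\cap H^1_0(\Omega)$, but two points in that step are wrong as written.

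\textbf{The multiplier center.} If the common vertex of the two observed sides is at the origin, those sides pass through the origin. Then $x\cdot n=0$ on $\Gamma$'s sides and $x\cdot n=a\pi$ or $b\pi>0$ on the two unobserved sides. That is the opposite of what you claim, and the unobserved boundary term is then positive and cannot be dropped. You must center at the opposite vertex. For example, for $\Gamma\supset\{x_1=a\pi\}\cup\{x_2=b\pi\}$ and $\Omega=(0,a\pi)\times(0,b\pi)$, take $m(x)=x$. Then $x\cdot n=0$ on $\{x_1=0\}\cup\{x_2=0\}$ and $0\le x\cdot n\le\max(a,b)\pi$ on $\Gamma$.

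\textbf{The identity in step 1.} In dimension two, what vanishes is the interior term $\frac{d-2}{2}\|\nabla u\|^2$ coming from $\Re\int\Delta u\,x\cdot\nabla\bar u$. The $\omega^2$ term survives, and the correct identity is
\[
2\omega^2\|u\|^2=\int_{\partial\Omega}(x\cdot n)\Big|\frac{\partial u}{\partial n}\Big|^2ds-2\Re\int_\Omega f\,x\cdot\nabla\bar u\,dx .
\]
Testing on $\sin(x_1/a)\sin(x_2/b)$ shows your version is off by a factor $2$. This does not hurt the strategy. Your step 2 converts $\omega^2\|u\|^2$ into $\|\nabla u\|^2$ at the cost of $\|f\|\|u\|\le\lambda_1^{-1}\|f\|\|\nabla u\|$. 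Poincaré covers all $\omega$ at once, so no split at $|\omega|=1$ is needed. The source term should be bounded using $\sup_\Omega|x|=\pi\sqrt{a^2+b^2}$ rather than $\max(a,b)\pi$.

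With these corrections, Young's inequality gives the uniform estimate, and the rest of your argument coincides with the paper's.
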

\begin{proof}
The following result has been shown using Rellich identities (which are somehow related to the multiplier approach in observability inequalities \cite{Li}) \cite{UH}.
\begin{lemma}  Under Assumption I on $\Gamma$, a solution $u \in H^{1}(\Omega)$ to the system \eqref{helmotzequationn} satisfies the following estimate:
\begin{equation}\label{stabest}  c_1 \Big (  \lVert \nabla u \rVert^2_{L^2(\Omega)} + \omega^2  \lVert u \rVert^2_{L^2(\Omega)} \Big ) 
 \leq  \lVert (\Delta + \omega^2)u \rVert^2_{L^2(\Omega)} + \Big\lVert \frac{\partial u}{\partial n} \Big\rVert^2_{L^2(\Gamma) },     \end{equation}
     for all $ \omega \in \mathbb{R},$ where $c_1 > 0$ is a positive constant that only depends on $\Gamma$.
\end{lemma}
Following the proof of  Theorem \ref{observabilitywavee} there exists a constant $c_2= c_2(\Gamma, \|C_0\|)>0$ such that the following inequality:

 \begin{eqnarray} \label{Hautus condition}
 c_2 \left \lVert z \right \rVert^{2}_{H_0^1(\Omega) \times L^2(\Omega)} \leq 
\left \lVert (A - \lambda I)z \right \rVert^{2}_{H_0^1(\Omega) \times L^2(\Omega)} + \left \lVert Cz \right \rVert^{2}_{L^{2}(\Gamma)},\end{eqnarray}
 holds for all  $\lambda \in \mathbb{R}, $ and all $   z \in D(A)$.\\

\noindent
It follows from the inequality \eqref{Hautus condition} that the operator $C$ is spectrally coercive with
\[
\Psi(\lambda)=\varepsilon(\lambda)=\frac{c_2}{2}, \quad \forall \lambda \in \mathbb R.
\]
Therefore, by Theorem~\ref{spectralcrvv1}, the observability inequality \eqref{obscarre} holds with
\[
T_0=\frac{2\theta_1}{c_2}
\quad\text{and}\quad
c=\frac{\theta_2 c_2}{2}.
\]

\end{proof}

\begin{remark}
Since $\Gamma$ contains at least two adjacent sides of $\Omega$, it satisfies the geometric assumption of \cite{BLR}. Consequently, the system is exactly controllable.\

\noindent
Furthermore, the geometric control condition is not only sufficient but also necessary for the exact controllability of the wave equation, as proved in \cite{BLR} (see also \cite{BG} for the case of boundary control).

\end{remark}

\noindent \textbf{Assumption II :} We assume that $\Gamma$ is  a one side  of $\Omega$. Without loss of generality, we further assume that $\Gamma = (0,a\pi) \times \{ 0 \}$. \\

\noindent Since the geometric assumption of \cite{BLR} is not satisfied in the present setting, we establish a new weak observability result for the wave equation with observations restricted to one side of the rectangular domain~$\Omega$.

\begin{theorem} The system \eqref{waveequation} is weakly observable, that is,  the solution $u$ satisfies 
the following 
  \begin{equation}
   \Psi\Big(\lambda_{|A|}\left((u_0,u_1)^t\right)\Big) \Big (\lVert \nabla u_{0} \rVert^{2}_{L^2(\Omega)} + \lVert u_1\rVert^{2}_{L^2(\Omega)}\Big ) \leq    \int_0^{T} \int_{\Gamma} \Big| \frac{\partial u}{\partial n} \Big|^2 ds(x) dt, \quad
    \end{equation}
 for all $z_{0}=(u_0, u_1) \in  H^2(\Omega) \cap H_0^1(\Omega) \times H_0^1(\Omega)$ and for all 
 $$T \geq \mathtt T(\lambda_{|A|}\left((u_0,u_1)^t\right)), $$
 where  $\Psi \in \Sigma_d$   and  $ \mathtt T \in \Sigma_i$  are respectively given by 
\begin{equation}
    \Psi(\lambda) = \frac{c_4}{c_3+c_2\lambda^2}, \qquad   \mathtt T(\lambda)=\Big(c_1 + c_2\lambda^2\Big),
 \end{equation}
 and 
 \begin{equation}
 \lambda_{|A|}\left((u_0,u_1)^t\right) = \big(\|\Delta u_0\|_{L^2(\Omega)}^2 +\|\nabla u_1\|_{L^2(\Omega)}^2 \big)
 \left( \|\nabla u_0\|_{L^2(\Omega)}^2 + \|u_1\|^2_{L^2(\Omega)}\right)^{-1},
 \end{equation}
 with  $c_i = c_i(\Gamma, \Omega, \|C_0\|)>0, \; i=1, \,3,\,4,$ and $c_2>0$ is an universal constant.
\end{theorem}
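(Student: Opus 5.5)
The plan is to reduce the statement to Corollary~\ref{maincorollary}, or more precisely to Theorem~\ref{observabilitywavee} followed by Theorem~\ref{spectralcrvv1}. This means establishing a resolvent (Hautus-type) estimate for the Dirichlet Laplacian with observation on the single side $\Gamma=(0,a\pi)\times\{0\}$, with a loss $\delta_0(\omega)\sim (c+\omega^2)^{-1}$. Concretely, the target is
\begin{equation*}
\frac{c_0}{1+\omega^2}\Big(\omega^2\|u\|^2_{L^2(\Omega)}+\|\nabla u\|^2_{L^2(\Omega)}\Big)\le \|(\Delta+\omega^2)u\|^2_{L^2(\Omega)}+\Big\|\frac{\partial u}{\partial n}\Big\|^2_{L^2(\Gamma)},
\end{equation*}
valid for all $\omega\in\mathbb R$ and $u\in H^2(\Omega)\cap H^1_0(\Omega)$.

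To prove it I will use separation of variables in $x_1$. Expand $u(x_1,x_2)=\sum_{m\ge1}u_m(x_2)\sin(mx_1/a)$ and similarly $f=-(\Delta+\omega^2)u=\sum f_m\sin(mx_1/a)$. By Parseval on $(0,a\pi)$, the estimate decouples into a uniform family of one-dimensional problems on $(0,b\pi)$:
\[
-u_m''+(m^2/a^2-\omega^2)u_m=f_m,\qquad u_m(0)=u_m(b\pi)=0,
\]
with observation $|u_m'(0)|^2$. For each $m$, set $k^2=\omega^2-m^2/a^2$. The goal is
\[
\frac{c_0}{1+\omega^2}\big(\omega^2\|u_m\|^2+\|u_m'\|^2+\tfrac{m^2}{a^2}\|u_m\|^2\big)\le\|f_m\|^2+|u_m'(0)|^2,
\]
with $c_0$ independent of $m$. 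I will obtain this in three steps.

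First, a one-dimensional multiplier identity. Multiply by $(x_2-b\pi)\overline{u_m'}$ and take real parts. This gives
\[
\int|u_m'|^2+k^2\int|u_m|^2 = b\pi|u_m'(0)|^2 + 2\Re\int f_m (x_2-b\pi)\overline{u_m'},
\]
up to signs and factors that I will fix carefully. It controls $\|u_m'\|^2+k^2\|u_m\|^2$ by $|u_m'(0)|^2$ plus $\|f_m\|\,\|u_m'\|$.

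Second, the energy identity $\|u_m'\|^2+(m^2/a^2-\omega^2)\|u_m\|^2=\Re\langle f_m,u_m\rangle$ converts control of $\|u_m'\|$ into control of $\tfrac{m^2}{a^2}\|u_m\|^2$ and $\omega^2\|u_m\|^2$.

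Third, the dangerous regime is $k^2\le0$ or $k$ small, that is, near grazing modes where $m/a\approx|\omega|$. In the worst case the multiplier gives only $\|u_m'\|^2\lesssim|u_m'(0)|^2+\|f_m\|^2$. Here I will use Poincaré, $\|u_m\|\le b\|u_m'\|$, to get $\|u_m\|^2\lesssim |u_m'(0)|^2+\|f_m\|^2$. Multiplying by $\omega^2$ and $m^2/a^2\lesssim\omega^2+1$ in this regime explains the loss factor $(1+\omega^2)^{-1}$. When $k^2<0$, i.e.\ $m/a>|\omega|$, the operator is coercive and $\|f_m\|$ alone controls everything, with a constant at least $\min(1,\cdot)$ uniform in $m$.

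Summing over $m$ yields the resolvent inequality \eqref{stabestt1} with $\delta_0(\omega)=c_0/(1+\omega^2)\in\Sigma_d$. Theorem~\ref{observabilitywavee} then gives \eqref{Hautus condition} with $\delta(\lambda)=c/(c'+\lambda^2)$. Its proof is a min of $\delta_0$ times constants, so the same form is kept, with $\lambda_1$ entering through the constants. As in the proof of Theorem~\ref{main1}, $C$ is spectrally coercive with $\varepsilon=\Psi=\delta/4$. Finally, Theorem~\ref{spectralcrvv1} applies: the equation $T\varepsilon(\theta_0(1/T+\lambda))=\theta_1$ with $\varepsilon(s)=c/(c'+s^2)$ is a quadratic-type equation in $T$. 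Its solution satisfies $\mathtt T(\lambda)\le c_1+c_2\lambda^2$, where $c_2$ depends only on $\theta_0,\theta_1$ and the normalizing constant, which can be absorbed. The lower bound $\theta_2\Psi(\theta_0(1/T+\lambda))$ with $T\ge\mathtt T\ge c_1$ gives $\Psi(\lambda)=c_4/(c_3+c_2\lambda^2)$ after adjusting constants. By the remark after the definition of weak observability, enlarging $\mathtt T$ and shrinking $\Psi$ is allowed.

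The main obstacle is the uniform-in-$m$ one-dimensional estimate near the grazing regime $m/a\approx|\omega|$. There the multiplier identity degenerates and one must check that the loss is exactly one power of $\omega^2$, not worse. The first thing to try there is the combination of the $(x_2-b\pi)$ multiplier with Poincaré as above. If the signs do not cooperate, the fallback is the explicit representation $u_m(x_2)=\int G_k(x_2,s)f_m(s)\,ds+u_m'(0)\sin(kx_2)/k$. Bounding this Green's function uniformly for small or imaginary $k$ would give the same $(1+\omega^2)^{-1}$ loss.
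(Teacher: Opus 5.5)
Your proposal is correct. From Theorem~\ref{observabilitywavee} onward it follows the paper's argument: $\delta(\lambda)\ge c_5/(1+\lambda^2)$, spectral coercivity with $\widetilde\Psi=\widetilde\varepsilon$ of the same form, Theorem~\ref{spectralcrvv1}, and a two-step bound on the root $\mathtt T_0(\lambda)$ (first a lower bound $\mathtt T_0\ge 2\theta_2c_5^{-1}$, then substituting it back).

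The genuine difference is the key input \eqref{stabestt1} with $\delta_0(\omega)=c_0/(1+\omega^2)$. The paper does not prove it; it imports it as a lemma from the literature, stated there for the impedance problem \eqref{helmotzequationn}. You prove it directly for $u\in H^2(\Omega)\cap H^1_0(\Omega)$, which is exactly the form Theorem~\ref{observabilitywavee} needs.

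The degeneration you fear near grazing modes does not occur. With $L=b\pi$ your identity is exact:
$\|u_m'\|^2+k^2\|u_m\|^2=L|u_m'(0)|^2+2\Re\int_0^L f_m(x_2-L)\overline{u_m'}\,dx_2$.
\begin{itemize}
\item For every $k^2\ge 0$, including $k=0$, this gives $\|u_m'\|^2\le 2L|u_m'(0)|^2+4L^2\|f_m\|^2$. Since $m^2/a^2\le\omega^2$ in this regime, Poincar\'e yields exactly the loss $1+\omega^2$.
\item For $k^2<0$, the energy identity gives $\|u_m\|\le\|f_m\|/(\pi^2/L^2+|k|^2)$. Using $m^2/a^2=\omega^2+|k|^2$, the whole left-hand side is at most $(2\omega^2L^4/\pi^4+L^2/\pi^2)\|f_m\|^2$.
\end{itemize}
So the Green's function fallback is unnecessary, and $c_0$ depends only on $b$. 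The computation also shows the loss comes precisely from the modes with $m/a\approx|\omega|$.

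Your route is self-contained with explicit constants, but it relies on separability of the rectangle. The paper's citation is shorter and rests on Rellich-type identities that are not tied to separation of variables.

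One step should not be glossed over: the normalizing constant cannot be ``absorbed'' into a universal $c_2$.
\begin{itemize}
\item With $\varepsilon(s)=c/(c'+s^2)$, the root of $T\varepsilon(\theta_0(1/T+\lambda))=\theta_1$ satisfies $T\ge\theta_1\theta_0^2\lambda^2/c$.
\item Hence $c_1+c_2\lambda^2\ge\mathtt T_0(\lambda)$ forces $c_2\ge\theta_1\theta_0^2/c$, which depends on $\Gamma$, $\Omega$ and $\|C_0\|$.
\item For $\Psi$ the coefficient of $\lambda^2$ can be normalized freely by rescaling $c_3,c_4$; for $\mathtt T$ it cannot.
\end{itemize}
The paper reaches a universal $c_2=4\theta_2$ only by dropping a factor $c_5^{-1}$ (and a factor $\theta_1^2$) when bounding $\mathtt T_0$. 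Solving its equation correctly gives $\mathtt T_0(\lambda)\le 2\theta_2c_5^{-1}\bigl(1+\theta_1^2(c_5(2\theta_2)^{-1}+\lambda)^2\bigr)$. So both arguments establish the statement with $c_2=c_2(\Gamma,\Omega,\|C_0\|)$, and you should state your conclusion that way.

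A minor point: after clearing denominators, the equation for $T$ is cubic rather than quadratic. The two-step bound does not depend on this.
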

\begin{proof}
 This following result has been derived  in \cite{BY}.
 \begin{lemma} Under Assumption II on $\Gamma$, a solution $u \in H^{1}(\Omega)$ to the system \eqref{helmotzequationn} satisfies the following estimate:
\begin{equation}\label{stabest}    \frac{c^*}{1+\omega^2} \Big (  \lVert \nabla u \rVert^2_{L^2(\Omega)} + \omega^2  \lVert u \rVert^2_{L^2(\Omega)} \Big )  \leq  \lVert (\Delta + \omega^2I)u \rVert^2_{L^2(\Omega)} + \Big\lVert \frac{\partial u}{\partial n} \Big\rVert^2_{L^2(\Gamma) },
    \end{equation}
     for all $ \omega \in \mathbb{R}$, where $c^* > 0$ that only depends on $\Gamma$ and $\Omega$. 
     \end{lemma}

By Theorem \ref{observabilitywavee}, there exists a positive and even function $\delta$, decreasing on 
$\mathbb R_+$, such that the following inequality holds 

 \begin{eqnarray} \label{Hautus conditionn}
 \delta(\lambda) \left \lVert z \right \rVert^{2}_{H_0^1(\Omega) \times L^2(\Omega)} \leq 
\left \lVert (A - \lambda I)z \right \rVert^{2}_{H_0^1(\Omega) \times L^2(\Omega)} + \left \lVert Cz \right \rVert^{2}_{L^{2}(\Gamma)}, \qquad \forall \lambda \in \mathbb{R}, \;  \forall z \in D(A).
 \end{eqnarray}

Following the proof of Theorem~\ref{observabilitywavee}, one can show that there exists a constant
$c_5=c_5(\Gamma,\|C_0\|)>0$ such that
\begin{equation} \label{constantc2}
\delta(\lambda)\geq \frac{c_5}{1+\lambda^2}, \qquad \text{for all } \lambda\in\mathbb{R}.
\end{equation}

It follows from inequalities \eqref{Hautus conditionn} and \eqref{constantc2} that the operator $C$ is spectrally coercive with
\[
 \widetilde \Psi(\lambda)=  \widetilde \varepsilon(\lambda)= \frac{c_5}{2(1+\lambda^2)}, \quad \forall \lambda \in \mathbb R.
\]
Therefore, by Theorem~\ref{spectralcrvv1}, the observability inequality \eqref{obscarre} holds with $ \mathtt T_0(\lambda) $  solution to 
\begin{equation} \label{eeer}
T\widetilde \varepsilon( \theta_1(\frac{1}{T} +\lambda)) \;= \;\theta_2.
\end{equation}

We next derive a simplified an upper bound $ \mathtt T(\lambda) $  of $ \mathtt T_0(\lambda) $. We first deduce from \eqref{eeer} that
$ \mathtt T_0(\lambda) \geq 2c_5^{-1}\theta_2$. Using the latter bound  in  equation \eqref{eeer}, we get 
\[
\mathtt T_0(\lambda)  \leq 2\theta_2(1+\theta_1^2(c_5(2\theta_2)^{-1} +\lambda)^2)\leq \mathtt T(\lambda)=  (c_1+ c_2\lambda^2),
\]
where $c_1= 2\theta_2(1+ 2c_5^2(2\theta_2)^{-2})$  and $c_2= 4\theta_2$. On the other hand, since 
$\mathtt  T(\lambda) \geq  2c_5^{-1}\theta_2$,
we have 

\begin{equation}
\theta_2  \widetilde \Psi( \theta_0(\frac{1}{T}+\lambda))  \geq \frac{c_5 \theta_2}{2(1+\theta_0^2 ( c_5 (2\theta_2)^{-1}+\lambda)^2)}\geq \Psi(\lambda)= \frac{c_4}{c_3+c_2\lambda^2}, \qquad \forall \lambda \in \mathbb R_+,
\end{equation} 
with $c_4= c_5\theta_2$, and $c_3 = 4 \theta_0^2c_5^2 (2\theta_2)^{-2}$.
\end{proof}

\begin{remark}
The approach developed in this section readily extends to a wide range of settings in which resolvent estimates of the form \eqref{stabestt1}  are available \cite{AA,BO,CL}.
\end{remark}
\section{Appendix}
\subsection{Proof of Theorem \ref{spectralcrv}}

    Let $z \in D(A) \setminus\{ 0 \}$ be fixed.
    We first assume that $C \in \mathcal L(D(A), Y)$ is spectrally coercive and prove that \eqref{resolineqq1} is satisfied. Let $\varepsilon, \, \Psi$ $\in \Sigma_d$ the functions appearing in the spectral coercivity of the operator $C$ in Definition 2.1, and consider the following possible two cases:
    
\noindent \textbf{{Case 1}:} Assume that 
 $$\frac{\lVert Az \rVert^{2}_{X}}{\lVert z \rVert^{2}_{X}} - \lambda_A^{2}(z) \leq   \varepsilon(\lambda_{|A|}(z)),$$ then by the spectral coercivity of $C$, we deduce that \eqref{resolineqq1}  holds.\\

\noindent \textbf{{Case 2}:} Assume that 
 \begin{equation} \label{iiiqqq}
 \frac{\lVert Az \rVert^{2}_{X}}{\lVert z \rVert^{2}_{X}} - \lambda_A^{2}(z) > \varepsilon(\lambda_{|A|}(z)).
\end{equation}
Now, take $$\phi(\lambda) = \frac{\lVert (A -\lambda I)z \rVert^{2}_{X}}{(\lambda - \lambda_A(z))^{2} + \varepsilon(\lambda_{|A|}(z))},$$ and let us study its variation. By simple calculation, we obtain 
 
 $$\phi^\prime(\lambda) = \frac{ 2(\lambda -\lambda_A(z)) \lVert z \rVert^{2}_{X} \left(\varepsilon(\lambda_{|A|}(z)) - \frac{\lVert Az \rVert^{2}_{X}}{\lVert z \rVert^{2}_{X}} + \lambda_A^2(z)\right)}{((\lambda - \lambda_A(z))^{2} + \varepsilon(\lambda_{|A|}(z)))^{2}}.$$

Hence $\phi$ attains a critical point at $\lambda = \lambda_A(z)$. We deduce from \eqref{iiiqqq} that $\phi$ exhibits a monotonic increase over $(-\infty, \lambda_A(z))$ and a monotonic decrease over $(\lambda_A(z), +\infty)$, indicating a global maximum at $\lambda_A(z)$. In particular, $\phi(\lambda)$ asymptotically approaches $\lVert z \rVert^{2}_{X}$ as $\lambda \to \pm\infty $. In conclusion, we have 

\begin{equation} \label{iiineq}  \frac{\lVert (A -\lambda I)z \rVert^{2}_{X}}{(\lambda - \lambda_A(z))^{2} + \varepsilon(\lambda_{|A|}(z))} \geq \|z\|^2_{X}, \end{equation}  for all $\lambda \in \mathbb{R},$ which provides 
the desired inequality  \eqref{resolineqq1}.\\

We now assume that \eqref{resolineqq1} holds, and we shall show that $C \in  \mathcal L(D(A), Y)$ satisfies the spectral coercivity in Definition 2.1. Let $\varepsilon$ and $\Psi$ $\in \Sigma_d$ be the functions appearing in \eqref{resolineqq1} and assume that $z \in D(A) \setminus \{0\}$ satisfies
\begin{equation} \label{pppoi}
\lVert (A -\lambda_A(z)I)z \rVert^{2}_{X}  \leq  \varepsilon (\lambda_{|A|}(z)) \lVert z \rVert^{2}_{X}.
\end{equation}
On the other hand  taking  $\lambda = \lambda_A(z) \in \mathbb{R}$  in  \eqref{resolineqq1}, leads to
\begin{equation} \label{pppoi2}
 \lVert z \rVert^{2}_{X} \leq \sup \Big\{ \frac{\lVert Cz \rVert^{2}_{Y}} {\Psi(\lambda_{|A|}(z))}  , \frac{\lVert (A -\lambda_A(z) I)z \rVert^{2}_{X}}{ \varepsilon(\lambda_{|A|}(z))} \Big \}. 
 \end{equation}

Combining  now \eqref{pppoi} and \eqref{pppoi2} implies 
$\lVert z \rVert^{2}_{X} \leq \frac{\lVert Cz \rVert^{2}_{Y}} {\Psi(\lambda_{|A|}(z))} $, which gives the desired result.

\subsection{Proof of Theorem \ref{spectralcrvv1}}
The proof follows Fourier techniques developed in \cite{BZ, AT}.\\
    
We further assume the spectral coercivity of $C$, which is equivalent to the resolvent inequality \eqref{resolineq} holding, and we shall establish weak observability.

Let $\chi \in C_0^\infty(\mathbb{R})$ be a cut off function with a
compact support in $(-1, 1)$. For $T>0$, we further denote 
 \begin{equation}\label{cutoff}
\chi_T(t) = \chi \left(\frac{t}{T}\right), \qquad  t\in \mathbb{R}.
\end{equation}

Let $z_{0}\in D(A)\setminus\{0\}$.
Set $z(t)=e^{itA}z_{0}$, $x=\chi_T z$ and $f=\dot x -iA x$.
Since $\dot z-iA z =0$,  we have $f= \dot \chi_T z$.
The Fourier transform of $f$ with respect to time is given by
$$\widehat{f}(\tau)=(i\tau-iA)\widehat{x}(\tau),$$

where $\widehat{x}(\tau)$ is the Fourier transform of $x(t)$.
Applying \eqref{resolineqq1} to $\widehat{x}(\tau)\in D(A)\setminus\{0\}$ for 
$\lambda= \tau$, we obtain
\begin{equation}\label{ineq}
\|\widehat x(\tau)\|_X^2 \leq \sup \left\{\frac{\|{C \widehat x(\tau)}\|^{2}_Y}{\psi(\lambda_{|A|}(\widehat x(\tau)))}, 
\frac{\|\widehat{f}(\tau)\|^{2}_X}{(\tau-\lambda_A(\widehat x(\tau)))^2+\varepsilon(\lambda_{|A|}(\widehat x(\tau)))} \right\}.
\end{equation}

We remark that since $\widehat x(\tau) \not= 0,$  the inequality \eqref{ineq} is well justified. Next, we study the variation of the frequency $\lambda_{|A|}(\widehat x(\tau))$ as a function of $\tau$. \\

To simplify the analysis we will make some assumptions 
 on the cut-off function $\chi(s)$.  
We further assume that  $\chi \in C_0^\infty(\mathbb{R})$  is even and satisfies
 the following inequalities: 
\begin{equation} \label{bbchi}
\chi \in H^1_0(-1,1),\;\;\;  \frac{\kappa_1}{1+\tau^2}
 \leq |\widehat \chi(\tau)|  \leq \frac{\kappa_2}{1+\tau^2},  \, \forall  \tau \in \mathbb{R},
\end{equation}
where $\kappa_2> \kappa_1>0$ are two fixed constants that do not depend on $\tau$ (see Appendix in \cite{AT} for the construction of such a function). 

\begin{theorem}\label{Tfrequency} 
Let $z_{0} \in D(A)\setminus\{0\}$, and let  $z(t) = e^{itA}z_0$, and let $\widehat{x}(\tau)$ be the Fourier transform of $x(t)= \chi_T(t) z(t)$, where $\chi_T(t)$ is the cut-off function defined by \eqref{cutoff},  and satisfying  the inequality \eqref{bbchi}. \\
 
 Then, there exists a constant $c_0= c_0(\chi)>0$ such that 
 the following inequality
\begin{equation} \label{lambdatau}
\lambda_1 \leq |\lambda_{|A|}(\widehat x(\tau))|\leq 4|\tau|+ c_0|\lambda_{|A|}(z_0)|, 
\end{equation}
holds for all $ \tau \in \mathbb{R}.$ 
\end{theorem}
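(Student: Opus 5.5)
The plan is to compute $\widehat x(\tau)$ explicitly in the eigenbasis $(\psi_k)_{k\in\mathbb Z^*}$ of $A$, which turns both bounds in \eqref{lambdatau} into inequalities between weighted averages of the numbers $|\mu_k|=\lambda_{|k|}$.

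\textbf{Spectral formula.} Write $z_0=\sum_{k\in\mathbb Z^*} a_k\psi_k$ with $a_k=\langle z_0,\psi_k\rangle_X$. Then $z(t)=\sum_k a_k e^{it\mu_k}\psi_k$ and $x(t)=\chi_T(t)z(t)$. Computing the Fourier transform termwise (with the same convention used to write $\widehat f(\tau)=(i\tau-iA)\widehat x(\tau)$) gives
\[
\widehat x(\tau)=\sum_{k} a_k\, T\,\widehat\chi\bigl(T(\tau-\mu_k)\bigr)\,\psi_k .
\]
Set $w_k(\tau)=|\widehat\chi(T(\tau-\mu_k))|^2$. By \eqref{frequency|A|},
\[
\lambda_{|A|}(\widehat x(\tau))=\frac{\sum_k |\mu_k|\,w_k\,|a_k|^2}{\sum_k w_k\,|a_k|^2}.
\]
This denominator is strictly positive, because $w_k>0$ for every $k$ by the lower bound in \eqref{bbchi}. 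The lower bound $\lambda_1\le\lambda_{|A|}(\widehat x(\tau))$ is then immediate, since the right-hand side is an average of the values $|\mu_k|\ge\lambda_1$.

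\textbf{Upper bound: splitting the modes.} For a fixed $\tau$, split $\mathbb Z^*$ into the low set $L=\{k:|\mu_k|\le 4|\tau|\}$ and the high set $H=\{k:|\mu_k|>4|\tau|\}$. The low modes contribute at most $4|\tau|\sum_k w_k|a_k|^2$ to the numerator, which accounts for the term $4|\tau|$ in \eqref{lambdatau}.

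For $k\in H$ we have $\tfrac34|\mu_k|\le|\tau-\mu_k|\le\tfrac54|\mu_k|$. Combined with \eqref{bbchi}, this gives two-sided bounds
\[
\kappa_1^2\,\bigl(1+\tfrac{25}{16}T^2\mu_k^2\bigr)^{-2}\le w_k\le\kappa_2^2\,\bigl(1+\tfrac{9}{16}T^2\mu_k^2\bigr)^{-2}.
\]
Hence on $H$ the weights $w_k$ are comparable, with constants depending only on $\kappa_1,\kappa_2$ and not on $T$ or $\tau$, to the single weight $v_k=(1+T^2\mu_k^2)^{-2}$. This weight is a non-increasing function of $|\mu_k|$.

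\textbf{High part: the main obstacle.} It remains to show
\[
\sum_{k\in H}|\mu_k|\,w_k|a_k|^2\le c_0\,\lambda_{|A|}(z_0)\sum_k w_k|a_k|^2 .
\]
First replace $w_k$ by $v_k$ on $H$, at the cost of the factor $\kappa_2^2/\kappa_1^2$ and an absolute numerical constant. Then apply Chebyshev's sum inequality: the weights $v_k$ decrease while the values $|\mu_k|$ increase, so the $v$-weighted average of $|\mu_k|$ is at most the unweighted ($|a_k|^2$-weighted) average $\lambda_{|A|}(z_0)$. The difficulty is that this argument directly controls only the ratio taken over $H$, and the average of $|\mu_k|$ over $H$ alone can exceed the global average $\lambda_{|A|}(z_0)$.

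My first attempt to repair this is to apply Chebyshev to a global monotone minorant weight. Define $\tilde w_k=\min(w_k,\kappa_1^2 v_k)$ on $L$ and $\tilde w_k\approx v_k$ on $H$. I would then check that $\tilde w$ is non-increasing in $|\mu_k|$ across the threshold $4|\tau|$, and that it still dominates the high part from below in the denominator. If this does not work cleanly, the fallback is a dyadic decomposition in $|\mu_k|$. In each case the constant $c_0$ must depend only on $\kappa_1,\kappa_2$, that is only on $\chi$, and not on $T$; this holds because every comparison above involves only the scaled variable $T\mu_k$.
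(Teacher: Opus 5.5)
\textbf{Gap in the high-part estimate.} Your setup is correct: the formula for $\widehat x(\tau)$, the lower bound, the low set $L$, and the two-sided bounds on $w_k$ for $k\in H$. But the argument stops at the step that carries the theorem, namely $\sum_{k\in H}|\mu_k|w_k|a_k|^2\le c_0\lambda_{|A|}(z_0)\sum_k w_k|a_k|^2$, and the repair you sketch fails as written.

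Chebyshev's sum inequality needs $\tilde w_k$ to be non-increasing in $|\mu_k|$ on all of $\mathbb Z^*$, not only across the threshold $4|\tau|$. Your choice $\tilde w_k=\min(w_k,\kappa_1^2v_k)$ is not monotone inside $L$, because there $w_k$ depends on $\tau-\mu_k$ rather than on $|\mu_k|$. Concretely, take $\tau=\mu_k>0$ and any $j$ with $-\tau<\mu_j<0$. Then $w_k=|\widehat\chi(0)|^2\ge\kappa_1^2$, so $\tilde w_k=\kappa_1^2(1+T^2\tau^2)^{-2}$. Meanwhile \eqref{bbchi} permits $w_j=\kappa_1^2\bigl(1+T^2(\tau+|\mu_j|)^2\bigr)^{-2}$. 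This gives $\tilde w_j\le w_j<\tilde w_k$ even though $|\mu_j|<|\mu_k|$, so Chebyshev cannot be invoked. The dyadic fallback is not specified, so the upper bound in \eqref{lambdatau} remains unproved.

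\textbf{How to close it.} On $L$ you have $|\tau-\mu_k|\le 5|\tau|$, hence the uniform bound $w_k\ge\kappa_1^2(1+25T^2\tau^2)^{-2}$. This constant is exactly the value of your $H$-minorant $\kappa_1^2(1+\tfrac{25}{16}T^2\mu_k^2)^{-2}$ at $|\mu_k|=4|\tau|$. So define
\[
\tilde w_k=\kappa_1^2\Bigl(1+\tfrac{25}{16}\,T^2\max\bigl(|\mu_k|,4|\tau|\bigr)^2\Bigr)^{-2}.
\]
This weight has three properties:
\begin{itemize}
\item it is non-increasing in $|\mu_k|$;
\item $\tilde w_k\le w_k$ for every $k$;
\item $w_k\le (25/9)^2(\kappa_2/\kappa_1)^2\,\tilde w_k$ for $k\in H$.
\end{itemize}
Chebyshev's inequality with the probability weights $|a_k|^2/\|z_0\|_X^2$ gives $\sum_k|\mu_k|\tilde w_k|a_k|^2\le\lambda_{|A|}(z_0)\sum_k\tilde w_k|a_k|^2$. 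This closes your argument with $c_0=(25/9)^2\kappa_2^2/\kappa_1^2$.

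\textbf{The paper's route.} The paper avoids rearrangement entirely by splitting $\mathbb Z^*$ by distance to $\tau$ rather than by $|\mu_k|$. It uses the window $|\tau-\mu_k|\le K$ with $K=|\tau|+3\lambda_{|A|}(z_0)$, together with its complement.
\begin{itemize}
\item Markov's inequality $\sum_{\lambda_{|k|}>r}|z_k|^2\le r^{-1}\lambda_{|A|}(z_0)\|z_0\|_X^2$ shows the window carries at least two thirds of the mass of $z_0$.
\item The bounds in \eqref{bbchi} are monotone in $|\tau-\mu_k|$, so every weight outside the window is at most $(\kappa_2/\kappa_1)^2$ times every weight inside.
\end{itemize}
The window average is then at most $K+|\tau|$, and the outside contribution is at most a $\chi$-dependent multiple of $\lambda_{|A|}(z_0)$. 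This yields even $2|\tau|$ in place of $4|\tau|$.
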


\begin{proof}
Recall the expression of the frequency function:
\begin{equation} \label{lambda}
\lambda_{|A|}(\widehat x(\tau)) =  \langle |A|\widehat x(\tau) , \widehat x(\tau) 
\rangle_X \|\widehat x(\tau)\|_X^{-2},
\;\; \forall
\tau \in \mathbb{R}.  
\end{equation}
Let $z_0= \sum_{k\in \mathbb Z^*}  z_k \psi_k \in D(A)$.  Simple calculation leads to

\begin{equation}
\widehat x(\tau) =  \sum_{k\in \mathbb Z^*} \widehat \chi_T(\tau-\mu_k) z_k \psi_k.  
\end{equation}
Hence
\begin{equation} \label{ide}
\lambda_{A}(\widehat x(\tau)) =  \sum_{k\in \mathbb Z^*}  \lambda_{|k|} |
\widehat \chi_T(\tau-\mu_k)|^2 |z_k|^2 \left( \sum_{k\in \mathbb Z^*} 
|\widehat \chi_T(\tau-\mu_k)|^2 |z_k|^2 \right)^{-1}.  
\end{equation}

We first remark that  $\lambda_{|A|}(\widehat x(\tau))\geq \lambda_1$ for all $\tau \in \mathbb R$, and it tends to  $\lambda_{|A|}(z_0)$
when $\tau$ approaches  $\infty$. \\

Let $K \in \mathbb R_+$ be large enough, and set
$$
\sum_{k \in \mathbb Z^*}  |z_k|^2 =  \sum_{|\tau-\mu_k|\leq K} |z_k|^2 +\sum_{|\tau-\mu_k|> K}  
|z_k|^2
= \mathcal I_1+  \mathcal I_2.
$$
\begin{lemma} \label{lastlemma}
Let $K_\tau= |\tau|+r_0$  with
\begin{equation} \label{r0}
r_0= 3\lambda_{|A|}(z_0).
\end{equation}
Then  the following inequality holds
\begin{equation} \label{ii}
2\mathcal I_2 \leq \mathcal I_1, \;\; \; \textrm{  for all  } K \geq K_\tau.
\end{equation}
\end{lemma}
\proof 
We claim that  that there exists $r_0>0$ large enough such that 
\begin{equation} \label{ll1}
 2\sum_{\lambda_{|k|}> r}  |z_k|^2 \leq 
  \sum_{\lambda_{|k|} \leq r}  |z_k|^2, \qquad \forall r \geq r_0. 
\end{equation}
or equivalently 

$$
3\sum_{\lambda_{|k|}> r}  |z_k|^2  \leq \|z_0\|_X^2.
 $$

In fact, we have 
\begin{equation} \label{pp}
\sum_{\lambda_{|k|}> r}  |z_k|^2 <  \frac{1}{r}
\sum_{\lambda_{|k|}> r_0} \lambda_{|k|}  |z_k|^2  \leq 
\frac{\lambda_{|A|}(z_0)}{r}\|z_0\|_X^2,
\end{equation}
and   inequality \eqref{ll1}  holds for all $ r\geq r_0$.\\

 Now by taking $K=|\tau| +r_0$,  and using the bounds \eqref{bbchi} with
 $\widehat \chi_T(s)= T\widehat \chi(Ts)$ in mind, 
we get  
\begin{equation}\label{ff1}2\mathcal I_2 \leq  2\sum_{\lambda_{|k|}> K+\tau}  |z_k|^2,
\end{equation}

\begin{equation}\label{ff2} \mathcal I_1 \geq \sum_{\lambda_{|k|}\leq K+|\tau|}  |z_k|^2.
\end{equation}
Since $K+\tau \geq r_0$, inequalities \eqref{ll1}, \eqref{ff1} and  \eqref{ff2} imply
\begin{equation}
2\mathcal I_2 \leq \sum_{\lambda_{|k|}\leq K+\tau} 
\lambda_{|k|} |z_k|^2 \leq  \mathcal I_1.
\end{equation}
Then,  inequality \eqref{ii} is valid for all $ K\geq K_\tau= |\tau|+r_0$. \\
\endproof

Back to the proof of the Theorem \ref{Tfrequency}. Using the fact that 

\begin{equation} \label{iii}
\sum_{\lambda_{|k|} \leq K}  
\widehat \chi_T(\tau-\mu_k)|^2 |z_k|^2 \leq \|\widehat x(\tau)\|_X^{2},
\end{equation}
 we have 
\begin{align*}
 \lambda_{|A|}(\widehat x(\tau)) \leq \ \ &   \left(\sum_{|\tau-\mu_k|\leq K} \lambda_{|k|} 
|\widehat \chi_T(\tau-\mu_k)|^2 |z_k|^2\right) \left( \sum_{|\tau-\mu_k|\leq K}
|\widehat \chi_T(\tau-\mu_k)|^2 |z_k|^2 \right)^{-1} \\
& + \left(\sum_{|\tau-\mu_k|>K} \lambda_{|k|} |
\widehat \chi_T(\tau-\mu_k)|^2 |z_k|^2\right) \left( \sum_{|\tau-\mu_k|\leq K}
|\widehat \chi_T(\tau-\mu_k)|^2 |z_k|^2 \right)^{-1}
&= \mathcal J_1 +\mathcal J_2.
\end{align*}

On the other hand we have
\begin{equation} \label{j1}
\mathcal J_1 \leq (K+|\tau|).
\end{equation}
In addition, using again the bounds \eqref{bbchi},  we obtain
\begin{equation} \label{j2}
\mathcal J_2 \leq  \frac{\kappa_2}{\kappa_1}\sum_{|\tau-\mu_k| > K} \lambda_{|k|}
|z_k|^2 \left( \sum_{_{|\tau-\mu_k| \leq K}  }  |z_k|^2 \right)^{-1}. 
\end{equation}

Taking  $K \geq K_\tau$, Lemma \ref{lastlemma}
leads to  
\begin{equation*} \label{bb}
\mathcal J_2 \leq  \left( \frac{2}{3} \|\widehat x(\tau)\|_X^{2} \right)^{-1}. 
\end{equation*}

Hence

\begin{equation} \label{jj2}
\mathcal J_2 \leq  \frac{3\kappa_2}{2\kappa_1}  \lambda_{|A|} (z_0).
\end{equation}
Combining inequalities \eqref{j1} and \eqref{jj2}, we get 
$$
 \lambda_{|A|}(\widehat x(\tau)) \leq K+ |\tau|  + \frac{3\kappa_1}{2\kappa_2} \lambda_{|A|}(z_0).
 $$
 for all  $ K \geq K_\tau$.\\
 
Consequently, the proof is achieved by taking  
$c_0=   3+ \frac{3\kappa_2}{2\kappa_1}$
\end{proof}

\begin{remark}
The upper bound of $\lambda_{|A|}(\widehat x(\tau))$ obtained 
in Theorem~\ref{Tfrequency} is not optimal since 
$\lambda_{|A|}(\widehat x(\tau)) = |\lambda_{|k|}|= \lambda_{|A|}(z_0)$ if $z_0 = \psi_k$.
Moreover when $\lambda_{max}(z_0) = \max \{\lambda_{|k|}, \; k \in \mathbb Z^*,\; 
\langle z_0,\psi_k\rangle_X \not= 0\} < \infty$, we can easily show that  
$ \lambda_{|A|}(\widehat x(\tau)) \leq \lambda_{max}(z_0) $.  We remark that  in both cases the bounds of $\lambda_{|A|}(\widehat x(\tau)) $ are independent of the Fourier frequency $\tau$.
 \end{remark}

 \begin{lemma} Let $c_0^\prime =  \frac{\|\dot \chi\|_{L^2(-1, 1)}}{\|\chi\|_{L^2(-1, 1)}},$
  $z_{0}\in D(A)\setminus\{0\}$, and let  $z(t) = e^{itA}z_0$, and let $\widehat{x}(\tau)$ be the Fourier transform of $x(t)= \chi_T(t) z(t)$, where $\chi_T(t)$ is the cut-off function defined by \eqref{cutoff}. \\
 
 Then,   the following inequality 
 \begin{equation} \label{inn1}
 \left(1- \frac{1}{R}\left( \frac{c_0^\prime}{T}  +\lambda_{|A|}(z_0)
  \right)\right) \|z_0\|_X^2 \leq \|\chi\|_{L^2(-1, 1)}^{-2} \int_{-R}^R\|\widehat{x}(\tau)\|_X^2 d\tau 
 \end{equation}
 holds for all $R >  \frac{c_0^\prime}{T}  +\lambda_{|A|}(z_0).$
 
 \end{lemma}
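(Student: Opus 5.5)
The plan is to compute $\int_{\mathbb R}\|\widehat x(\tau)\|_X^2\,d\tau$ exactly by Plancherel, show that the part with $|\tau|>R$ is small, and subtract it. With the normalization $\widehat x(\tau)=\int x(t)e^{-it\tau}dt$ (or its $\frac{1}{\sqrt{2\pi}}$ variant), I will use the Plancherel identity
\[
\int_{\mathbb R}\|\widehat x(\tau)\|_X^2\,d\tau=\int_{\mathbb R}\|\chi_T(t)z(t)\|_X^2\,dt=\|z_0\|_X^2\int_{\mathbb R}\chi(t/T)^2\,dt=T\|\chi\|_{L^2(-1,1)}^2\|z_0\|_X^2 .
\]
This uses that $e^{itA}$ is an isometry; constants from the Fourier normalization are absorbed. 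The left side of \eqref{inn1} has no factor $T$, so I expect that either the intended normalization divides by $T$ or $\|\chi\|$ refers to $\chi_T$. I will carry the factor $T$ through and normalize at the end.

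To control the tail, I expand $z_0=\sum_k z_k\psi_k$ and use $\widehat x(\tau)=\sum_k\widehat\chi_T(\tau-\mu_k)z_k\psi_k$, which gives $\|\widehat x(\tau)\|_X^2=\sum_k|\widehat\chi_T(\tau-\mu_k)|^2|z_k|^2$. The key is to use the weight $|\tau|\le|\tau-\mu_k|+\lambda_{|k|}$, which holds since $|\mu_k|=\lambda_{|k|}$. On $\{|\tau|>R\}$ this yields
\[
\int_{|\tau|>R}\|\widehat x(\tau)\|_X^2\,d\tau\le\frac1R\int_{\mathbb R}|\tau|\,\|\widehat x(\tau)\|_X^2\,d\tau\le\frac1R\sum_k|z_k|^2\int_{\mathbb R}\big(|s|+\lambda_{|k|}\big)|\widehat\chi_T(s)|^2\,ds .
\]
The $\lambda_{|k|}$ part contributes $T\|\chi\|^2\sum_k\lambda_{|k|}|z_k|^2=T\|\chi\|^2\lambda_{|A|}(z_0)\|z_0\|_X^2$ by Plancherel for $\chi_T$ and Definition \ref{|A|frequency}.

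For the $|s|$ part I apply Cauchy–Schwarz:
\[
\int|s||\widehat\chi_T|^2\le\Big(\int s^2|\widehat\chi_T|^2\Big)^{1/2}\Big(\int|\widehat\chi_T|^2\Big)^{1/2}\simeq\|\dot\chi_T\|_{L^2}\|\chi_T\|_{L^2}=\frac{T\|\dot\chi\|\,\|\chi\|}{T}=\|\chi\|^2\,\frac{c_0'}{T}\cdot T ,
\]
using $\|\dot\chi_T\|_{L^2}=T^{-1/2}\|\dot\chi\|$ and $\|\chi_T\|_{L^2}=T^{1/2}\|\chi\|$. This step relies on $\chi\in H^1_0$, which is guaranteed by \eqref{bbchi}. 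Hence
\[
\int_{|\tau|>R}\|\widehat x\|^2\le\frac{T\|\chi\|^2}{R}\Big(\frac{c_0'}{T}+\lambda_{|A|}(z_0)\Big)\|z_0\|_X^2 .
\]

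Subtracting this tail from the total gives $\int_{-R}^R\|\widehat x\|^2\ge T\|\chi\|^2\big(1-\frac1R(\frac{c_0'}{T}+\lambda_{|A|}(z_0))\big)\|z_0\|_X^2$, which is \eqref{inn1} up to the normalization factor. The condition on $R$ only ensures the left side is positive. The main obstacle is bookkeeping the Fourier normalization and the factor $T$ consistently with the paper's convention, which requires either dividing by $T$ or reading $\|\chi\|$ as $\|\chi_T\|$. The only genuinely nontrivial step is the weight trick $|\tau|\le|\tau-\mu_k|+\lambda_{|k|}$ combined with Cauchy–Schwarz in $s$, which is what produces exactly the constant $c_0'=\|\dot\chi\|/\|\chi\|$.
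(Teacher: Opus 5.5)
Your argument is correct and is essentially the paper's. Both proofs use Plancherel for the total mass, plus a $1/R$ bound on the tail $|\tau|>R$ through the first moment $\int|\tau|\,\|\widehat x(\tau)\|_X^2\,d\tau$. The paper controls that moment via the transformed equation $\tau\widehat x=-i\widehat f+A\widehat x$ (with $f=\dot\chi_T z$) and a global Cauchy--Schwarz, whereas you do it mode by mode with $|\tau|\le|\tau-\mu_k|+\lambda_{|k|}$; the constants come out the same.

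You are also right to be suspicious of the missing $T$, and it is more than a normalization convention. The paper's proof loses a factor $1/T$ in front of $\int_{-R}^R$ in its Plancherel step. As a result, the inequality as literally stated fails for small $T$: letting $R\to\infty$, its right side tends to $T\|z_0\|_X^2$ in the unitary normalization, while its left side tends to $\|z_0\|_X^2$. So the version you prove, with $\bigl(T\|\chi\|_{L^2(-1,1)}^2\bigr)^{-1}$ in place of $\|\chi\|_{L^2(-1,1)}^{-2}$, is the correct one.
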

\begin{proof}
Recall that 
$\dot x = f+iA x$ where $f= \dot \chi_T z$. By integration by parts we then have
\[
\widehat{x}(\tau) = -\frac{i}{\tau} \left(\widehat{f}(\tau)+ iA\widehat{x}(\tau) \right).
\]
Consequently 
\[
\|\widehat{x}(\tau)\|_X^2  =
\langle -\frac{i}{\tau} \left(\widehat{f}(\tau)+ iA\widehat{x}(\tau) \right), \widehat{x}(\tau)\rangle_X.
\]
Then for any $R>0$, by Fourier-Plancherel Theorem, we have
$$
  \|\chi\|_{L^2(-1, 1)}^2 \|z_0\|_X^2 
   \leq \int_{-R}^R\|\widehat{x}(\tau)\|_X^2 d\tau+
  \frac{1}{R}\left( \frac{1}{T} \|\dot \chi\|_{L^2(-1, 1)}\|\chi\|_{L^2(-1, 1)} +\lambda_{|A|}(z_0)\|\chi\|_{L^2(-1, 1)}^2 
   \right)\|z_0\|_X^2.
$$
Hence for $R$ large enough we have 
$$
 \left(1- \frac{1}{R}\left( \frac{1}{T} 
 \frac{\|\dot \chi\|_{L^2(-1, 1)}}{\|\chi\|_{L^2(-1, 1)}} +\lambda_{|A|}(z_0)
  \right)\right) \|z_0\|_X^2 
   \leq \|\chi\|_{L^2(-1, 1)}^{-2}\int_{-R}^R\|\widehat{x}(\tau)\|_X^2 d\tau,
 $$
which finishes the proof of the lemma.

\end{proof}

Back now to the proof of the theorem. Combining inequalities \eqref{ineq} and 
 \eqref{inn1}, we find
 
 \begin{equation} \label{mm1}
    \left(1- \frac{1}{R}\left( \frac{c_0}{T}  +\lambda_{|A|}(z_0)
  \right)\right) \|z_0\|_X^2 \leq \|\chi\|_{L^2(-1, 1)}^{-2} \left( \int_{-R}^R
   \frac{\|{C \widehat x(\tau)}\|^{2}_Y}{\psi(\lambda_{|A|}(\widehat x(\tau)))} d\tau + 
 \int_{-R}^R\frac{\|\widehat{f}(\tau)\|^{2}_X}{\varepsilon(\lambda_{|A|}(\widehat x(\tau)))}d\tau\right).
\end{equation}
 
 Applying the upper bound $\lambda_{|A|}(\widehat x(\tau))$ derived in 
 Theorem~\ref{Tfrequency}, and considering the monotony of the functions
 $\psi$ and $\varepsilon$ in $\Sigma_d$, we obtain 
  \begin{align*}
    \left(1- \frac{1}{R}\left( \frac{c_0^\prime}{T}  +\lambda_{|A|}(z_0)
  \right)\right) \|z_0\|_X^2 \leq \frac{1}{\psi(4R+ c_0\lambda_{|A|}(z_0))}  
  \frac{\|\chi\|_{L^\infty(-1,1)}^2}{\|\chi\|_{L^2(-1, 1)}^{2}}
  \int_{0 }^T|{C z(t)}\|^{2}_Yd\tau \\
  +
 \frac{1}{T\varepsilon(4R+ c_0\lambda_{|A|}(z_0))} 
 \frac{\|\dot \chi\|_{L^2(-1, 1)}^2}{\|\chi\|_{L^2(-1, 1)}^{2}}\|z_0\|_X^2,
\end{align*}
 for all $R >  \frac{c_0^\prime}{T}  +\lambda_{|A|}(z_0)$.\\
 
 Now, by taking $R= 2\left(\frac{c_0}{T}  +\lambda_{|A|}(z_0)\right)$, and 
 $\theta_0= \max(c_0^\prime, 8+c_0)$, we find
 
 \begin{align*}
  \left(1-   \frac{2}{T\varepsilon\left(\theta_0\left(\frac{1}{T}+\lambda_{|A|}(z_0)\right)\right)} 
 \frac{\|\dot \chi\|_{L^2(-1, 1)}^2}{\|\chi\|_{L^2(-1, 1)}^{2}}\right)\|z_0\|_X^2 
 \leq \frac{2}{\psi\left(\theta_0\left(\frac{1}{T}+\lambda_{|A|}(z_0)\right)\right)}  
  \frac{\|\chi\|_{L^\infty(-1,1)}^2}{\|\chi\|_{L^2(-1, 1)}^{2}}
  \int_{0 }^T\|{C z(t)}\|^{2}_Ydt.
\end{align*}

Let $\theta_1=  \frac{4\|  \chi\|_{L^2(-1, 1)}^{2}}{\| \dot \chi\|_{L^\infty(-1,1)}^2}$, and 
$\theta_2= \frac{4\|\chi\|_{L^2(-1,1)}^2}{\| \chi\|_{L^\infty(-1, 1)}^{2}}
$.  \\

Then, for $T\varepsilon(4R+ c_0\lambda_{|A|}(z_0))\geq  \theta_1$,  we finally get the wanted estimate:

 \begin{equation}  \label{fee}
  \theta_2\psi\left(\theta_0\left(\frac{1}{T}+\lambda_{|A|}(z_0)\right)\right)   \|z_0\|_X^2 \leq 
   \int_{0 }^T\|{C z(t)}\|^{2}_Ydt.
\end{equation}
Simple calculation shows that the function $T\mapsto 
T\varepsilon \left(\theta_0\left(\frac{1}{T}+\lambda_{|A|}(z_0)\right)\right)$ is increasing,
tends to infinity when $T$ approaches $+\infty$, and tends  to $0$ when $T$ 
approaches $0$. Then there exists a unique value $T(\lambda_{|A|}(z_0))>0$ that solves
the equation \eqref{Eq:T}. In addition, the function  $\lambda \mapsto T(\lambda)$ is
 increasing. Finally, the inequality \eqref{fee} is valid for all $T \geq T(\lambda_{|A|}(z_0))$. \\

Now, we shall prove the converse.
We further assume that the weak observability inequality \eqref{observability4}  holds for some fixed  $\psi$ and $\varepsilon$ in $\Sigma_d$. Our goal now is to show that  $C$ is indeed  spectrally coercive.\\

Let $z_0 \in D(A^2)$, and  $x_0:= (iA-i\tau I)z_0 $ for some $\tau \in \mathbb{R}$. Define $x(t)= e^{itA}x_0$ and
 $z(t) = e^{itA}z_0$. \\
 
  A forward computation shows  that $z(t)$ solves the following 
 $$
 \dot z(t) -i\tau z(t) = x(t), \;\;\forall t\in \mathbb{R}_+^*,\\
 z(0) = z_0.
 $$
 
 Then 
 
 \[
 z(t) = e^{i\tau t} z_0 +\int_{0}^t e^{i\tau(t-s)} x(s) ds.
 \]
 Applying now the observability operator both sides gives
 $$
 Cz(t) = e^{i\tau t} Cz_0+ \int_{0}^t e^{i\tau(t-s)} Cx(s) ds,
$$
 whence 
  $$
 \|Cz(t)\|_Y^2 \leq 2\|Cz_0\|_Y^2+2 \int_{0}^t \|Cx(s)\|_Y^2ds. 
 $$

Integrating the inequality above both sides over $(0, T)$, we obtain
$$
\int_0^T \|Cz(t)\|_Y^2 dt \leq 2T\|Cz_0\|_Y^2+2 T \int_0^T\| Cx(s) \|_Y^2 ds.
$$

We deduce from the admissibility assumption \eqref{observability3}
 that 
$$
 \int_0^T \|Cz(t)\|_Y^2 dt \leq 2T\|Cz_0\|_Y^2+2 T C_T\|(A-\tau I)z_0\|^2_X.
 $$

Applying the weak observability inequality \eqref{observability4} for $T= T(\lambda_{|A|}(z_0))$,
leads to 
\begin{align*}
&\theta_2\psi\left(\theta_0\left(\frac{1}{T(\lambda_{|A|}(z_0))}+\lambda_{|A|}(z_0)\right)\right)   \|z_0\|_X^2 \\
 &\leq 2T(\lambda_{|A|}(z_0))\|Cz_0\|_Y^2+2 T(\lambda_{|A|}(z_0)) C_{T(\lambda_{|A|}(z_0))}\|(A-\tau I)z_0\|^2_X,
\end{align*}
for all $\tau \in \mathbb R$. \\ 

Since $T(\lambda) \geq T_0 = T(0),$ for all $\lambda \geq 0$, we have
\begin{align*}
&\theta_2\psi\left(\theta_0\left(\frac{1}{T_0}+\lambda_{|A|}(z_0)\right)\right)   \|z_0\|_X^2 \\
&\leq 2T(\lambda_{|A|}(z_0))\|Cz_0\|_Y^2+2 T(\lambda_{|A|}(z_0)) C_{T(\lambda(z_0))}\|(A-\tau I)z_0\|^2_X.
\end{align*}

Taking $\tau = \lambda_A(z_0)$ in the previous inequality implies 
\begin{align*}
&\frac{\theta_2}{2 T(\lambda_{|A|}(z_0)) C_{T(\lambda_{|A|}(z_0))}}
\psi\left(\theta_0\left(\frac{1}{T_0}+\lambda_{|A|}(z_0)\right)\right)   \|z_0\|_X^2 \\
&\leq \frac{1}{C_{T(\lambda_{|A|}(z_0))}} \|Cz_0\|_X^2+\|(A-   \lambda_{A}(z_0)I)z_0\|^2_X.
\end{align*}

Let
$$
\widetilde \psi(\lambda) = \frac{\theta_2}{4 T(\lambda) }\psi\left(\theta_0\left(\frac{1}{T_0}+\lambda\right)\right),
$$\\
$$
\widetilde \varepsilon(\lambda) = \frac{\theta_2}{4 T(\lambda) C_{\lambda}} \psi \left(\theta_0\left(\frac{1}{T_0}+\lambda \right)\right).
$$

We deduce from the monotonicity properties  of $\psi(\lambda), C_\lambda, $ and $T(\lambda)$ that 
 $\widetilde \psi(\lambda),\, \widetilde \varepsilon(\lambda)
\in \Sigma_d $. \\

Consequently $C$ becomes  spectrally coercive with the functions
$\widetilde \psi(\lambda),\, \widetilde \varepsilon(\lambda)$, that is 

 \begin{eqnarray*}
 0\leq \frac{\|Az\|_X^2}{\|z\|_X^2} -\lambda_A^2(z) \leq \widetilde \varepsilon(\lambda_{|A|}(z)),
\end{eqnarray*}
implies
\begin{eqnarray*}
\|Cz\|_Y^2 \geq \widetilde \psi(\lambda_{|A|}(z)) \|z\|_{X }^2,
\end{eqnarray*}
which finishes the proof of the Theorem.

\end{document}